\newtheorem{theorem}{Theorem}
\newtheorem{lemma}{Lemma}
\begin{document}

\author{Robbert Fokkink and  Dan Rust}

\keywords{Impartial Combinatorial Games, $k$-bonacci words, Integer Sequences}
\subjclass{91A46 and 68R15}

\title{A modification of Wythoff Nim}

\begin{abstract}
We modify Wythoff's game by allowing an additional move,
which we call a \textit{split},
and show how
the $P$-positions
are coded by the Tribonacci word.
We analyze the table of letter positions of
arbitrary $k$-bonacci words and find
a $\mathrm{mex}$-rule that generates the
Quadribonacci table.
\end{abstract}

\maketitle

In an impartial game, the set of all positions is divided into
those that are winning and those that are losing for the player
that moves next. To play a game optimally, one needs to know
the subset $\mathcal P$ of all losing positions. 
The hardness of solving
an impartial game therefore depends on the complexity of~$\mathcal P$,
and so one runs into set theory.
Substitution sets are easy to describe
and have a well-developed theory~\cite{L}.
Eric Duch\^{e}ne, Michel Rigo, and others, are building a
framework that connects impartial games to substitutions,
~\cite{ DMPR, DR, DR3, DR2}.
A basic example is the
Fibonacci substitution $0\mapsto 01$, $1\mapsto 0$ which leads
to the infinite sequence $010010100100101\cdots$. 
If we take this to be the indicator function of a subset of $\mathbb N$,
then we find the losing positions $\mathcal P$ of Wythoff Nim.
Duch\^ene and Rigo found other impartial games that can be described by
other substitutions~\cite{DR,DR3}.
They asked in~\cite{DR} if it is possible to devise an impartial game that can
be described by the $k$-bonacci substitution.
This was the motivating question for our paper.

We were able to find a simple extension of Wythoff Nim that can be described
by the $3$-bonacci substitution. We were also able to find a $\mathrm{mex}$-relation
to generate the table of the $4$-bonacci substitution, but we were not able to
derive an impartial game from this. Our game, which we call \emph{Splythoff Nim},
is closely related to the Greedy Queen on a Spiral Game, which was recently solved
by Dekking, Shallit and Sloane~\cite{DSS}. Splythoff Nim and Greedy Queen
have the same $P$-positions but different Sprague-Grundy values.

Our paper is organized as follows. We first recall Wythoff Nim and the games and
number tables that arose out of it. Then we introduce Splythoff Nim, which
we solve by an analysis of number tables that come out of the $k$-bonacci
substitution. Finally, we present some numerical evidence that indicates that
$a$-Splythoff, which is the corresponding modification of $a$-Wythoff, can
be coded by a substitution if $a=2$ or $3$.

\section{Wythoff Nim}

The following take-away game was invented by Willem Wythoff
in \cite{W} and is now known as Wythoff's game or Wythoff Nim:
\emph{Two piles of counters are placed on the table.
Two players alternately either take an arbitrary number of counters from a single pile
or an equal arbitrary number from both piles. The player who takes the last counter,
or counters, wins.} It is one of the first impartial games to be reported
in the mathematical
literature, second only to Bouton's analysis of Nim.

In an impartial game, a position is either winning or losing for the
player that moves next. The winning positions are called the $N$-positions and
the losing positions are called the $P$-positions.
The first few non-zero $P$-positions of Wythoff Nim are contained in the table below.
There are several methods to generate this table.
Two methods were already presented by Wythoff. His first method was to generate the
table recursively, starting with the smallest $P$-position $(0,0)$
(which we don't list in the table). To find each next $P$-position,
list the smallest number that does not yet occur in the $A$ row and add $k+1$
to it for the $B$-row, where $k$ is the difference between $A$ and $B$ in the previous position.
His second method gave an explicit equation $A_k=\lfloor k\phi\rfloor$,
$B_k=\lfloor k\phi^2\rfloor$, where $\phi$ is the golden mean.
Twenty years after Wythoff's paper,
such complementary sequences became known as \emph{Beatty sequences}.
\begin{table}[htbp]
\begin{tabular}{c|cccccccccccccccc}
\hline
$A$&1&3&4&6&8&9&11&12&14&16&17&19&$\cdots$ \\
$B$&2&5&7&10&13&15&18&20&23&26&28&31&$\cdots$\\ \hline
\end{tabular}
\caption{\small{The first twelve non-zero $P$-positions for Wythoff Nim. The number of counters in
the smaller pile in $A$. The larger pile in $B$.}\label{tbl1}}
\end{table}

Wythoff Nim continues to be of interest
today and many modifications of
the game have been studied, see~\cite{DFGHKL} for a comprehensive overview.
Wythoff himself already proposed further modifications
of the game, although it seems
that this has been overlooked. 
Wythoff was not a man of many words~\cite{Fok} and
he only mentioned these modifications very briefly
in two final remarks.
These remarks were overlooked and only
rediscovered much later.
Wythoff's first remark was that for any $a$ it is possible to define complementary
sequences $A_k=\lfloor k\alpha\rfloor$ and
$B_k=\lfloor k\beta\rfloor$ for certain quadratic numbers
$\alpha$ and $\beta$ such that $B_k-A_k=ka$ for $k=1,2,3,\ldots$.
Wythoff only gave the $\alpha$ and $\beta$, and 
left the games that belong to these sequences
to the reader. It is difficult to guess what rules
he had in mind.
These games were rediscovered by
Holladay~\cite{H}, who gave four different take-away games that all lead to these 
$P$-positions.
The most natural rule is to allow the player to take an arbitrary number
of counters from both piles, as long as the difference between these
numbers is less than $a$, as also considered in~\cite{Front}.
This game is now called \emph{$a$-Wythoff}. The original game corresponds
to~$a=1$.
\begin{table}[htbp]
\begin{tabular}{c|ccccccccccccccccc}
\hline
$A$& 1& 2& 4& 5& 7& 8& 9&11&12&14&15&16&$\cdots$\\
$B$& 3& 6&10&13&17&20&23&27&30&34&37&40&$\cdots$\\
\hline
\end{tabular}
\caption{\small{The first twelve $P$-positions for $2$-Wythoff.
This table also appears in another generalization of
Wythoff Nim, see~\cite[p. 188]{C}.}\label{tbl2}}
\end{table}

One can study such tables in their own right. Wythoff's second remark
was that one can define
complementary
sequences $A_k=\lfloor k\alpha+\gamma\rfloor$ and $B_k=\lfloor k\beta+\delta\rfloor$
for certain quadratic numbers $\alpha,\beta,\gamma,\delta$ such
that $B_k-A_k=ka+b$ for arbitrary $0< b\leq a$ and $k=0,1,2,\ldots$.
If $b=a$ we recover the sequences for $a$-Wythoff.
Kimberling~\cite{K} rediscovered $A_k$ and
$B_k$ not so long ago, and called them $ka\!+\!b$ Wythoff sequences.
They
are special examples of \emph{non-homogeneous Beatty sequences}.
Skolem~\cite{Sk} and Fraenkel~\cite{Fnonhom} found
necessary and sufficient conditions on
$\alpha,\beta,\gamma,\delta$
such that the sequences are complementary:
\begin{eqnarray}
\frac 1\alpha+\frac 1\beta&=&1\\
\frac \gamma\alpha+\frac \delta\beta&=&\lfloor\alpha+\gamma\rfloor.
\end{eqnarray}
Wythoff left no clue as to how he found his $\alpha,\beta,\gamma,\delta$.
With the benefit of hindsight the numbers can easily
be computed from the Skolem-Fraenkel conditions
combined with $\beta\!-\!\alpha\!=\!a$ and $\delta\!-\!\gamma\!=\!b$.
These $(ka\!+\!b)$-sequences do not correspond to $P$-positions
in a readily defined take-away game.
For instance, $(1,2)$ is not in Table~\ref{tbl3} for the
$k\!+\!1$-Wythoff sequences,
and the only losing position within reach is $(0,0)$.
Now both $(2,5)$ to $(1,3)$ are in the table,
and moves between $P$-positions are impossible by definition.
If Table~\ref{tbl3} would represent $P$-positions, then
taking away $(1,2)$ would be forbidden from position
$(2,5)$ but allowed from position $(1,2)$.
A take-away game is called \emph{invariant} if forbidden moves do not depend on
positions, and so we see that there is no invariant game with $P$-positions
as in Table~\ref{tbl3}.
Larsson, Hegarty, and Fraenkel showed that for every pair
of complementary Beatty
sequences, there exists an invariant game which has
$P$-positions along these sequences.
This work has been extended to non-homogeneous Beatty sequences
in~\cite{CDR}.
\begin{table}[htbp]
\begin{tabular}{c|ccccccccccccccccc}
\hline
$A$& 1& 2& 4& 6& 7& 9&10&12&14&15&17&19&$\cdots$\\
$B$& 3& 5& 8&11&13&16&18&21&24&26&29&32&$\cdots$\\
\hline
\end{tabular}
\caption{\small{The first twelve entries for Wythoff's
sequences $B_k-A_k=k+1$.
}\label{tbl3}}
\end{table}

The tables that we encountered so far can be neatly described
by \textit{substitutions}. The Fibonacci substitution is
\[
\begin{array}{ccl}
0&\mapsto& 01\\
1&\mapsto& 0
\end{array}.
\]
If we iterate this substitution starting from $0\mapsto 01\mapsto 010\mapsto 01001\mapsto\cdots$
then in the limit we get the \emph{Fibonacci word} $010010100100101\cdots$, which is fixed
under the substitution.
It is well known that the numbers $A_k$ and $B_k$ in Table 1 correspond to
the locations of the zeroes and ones in this word, see~\cite[p. 381]{DR}.
Duch\^{e}ne and Rigo~\cite{DR3} proved that the Beatty sequences for $a$-Wythoff
correspond to the locations of the zeroes and ones in the fixed word
of $0\mapsto 0^a1,\ 1\mapsto 0$, where $0^a$ denotes the word with $a$ zeroes.
This is known as a \emph{noble means} substitution~\cite[p. 91]{BG}.
The numbers $A_k$ and $B_k$ in Table 2 correspond to the locations of the zeroes
and ones in the \textit{Pell word}, which arises from the substitution
$0\mapsto 001,\ 1\mapsto 0$, and is listed as 
\href{https://oeis.org/A171588}{A171588} in the On-Line
Encyclopedia of Integer Sequences.
It is possible to prove that the $ka\!+\!b$ Wythoff sequences
correspond to locations in the
fixed word of $0\mapsto 0^b10^{a-b},\ 1\mapsto 0$.
All these fixed words and substitutions are \emph{Sturmian}, see~\cite[Chapter 2]{Lot}.
Morse and Hedlund~\cite{MH} proved that all Sturmian words are of the form $U(\rho,x)=u_1u_2u_3\cdots$
with $u_{k}=\lfloor k\rho + x\rfloor-\lfloor (k-1)\rho +x\rfloor$
for some irrational $\rho\in(0,1)$ and some $x\in [0,1)$
(possibly interchanging $0$ and $1$ in the sequence).
The zeroes in $U(\rho,x)$ are at the locations $\lfloor k(\rho+1)+x\rfloor$.
To prove that the substitution $0\mapsto 0^a10^{b-a},\ 1\mapsto 0$
gives the $ak\!+\!b$ Wythoff sequence, one needs to convert the Sturmian word
to $U(\rho,x)$ and verify that $\rho+1=\alpha$ and $x=\gamma$
for Wythoff's $\alpha,\gamma$.
It is possible to do this using an algorithm of
Arnoux, et al~\cite{AFH}.
Conversely, it is possible to compute the Sturmian word from $U(\rho,x)$
using an algorithm of Ito and Yasutomi~\cite{IY}.
Both algorithms involve continued fraction expansions.
It turns out that the continued fraction expansion of $\alpha$
in the $ak\!+\!b$ Wythoff sequence is $[1;a,a,a,\ldots]$, which goes back to A.A. Markov, see
\cite[Theorem 1]{IY}.

\section{Splythoff Nim}

We modify Wythoff Nim by allowing the additional option of a \emph{split}. If
a player takes an equal number of counters from both piles and only one pile
remains, then he can split the remaining pile into two. For instance,
from position $(4,7)$ it is possible to move to $(1,2)$ by taking 4 counters from both piles and
splitting the remainder into 1 and 2. A split is only allowed after taking counters from both
piles. It is not allowed to take all counters from a single pile and then split.
We call this \emph{Splythoff Nim}. We refer to the three possible moves as single,
double, and split. The first few non-zero $P$-positions are in Table~\ref{tbl4} below.

\begin{table}[htbp]
\begin{tabular}{c|cccccccccccccccc}
\rowcolor[gray]{.8}$\Delta$&
1&2&4&5&6&7&9&10&11&13&14&15&$\cdots$ \\
\hline
$A$&1&3&4&6&7&9&10&12&14&15&17&18&$\cdots$ \\
$B$&2&5&8&11&13&16&19&22&25&28&31&33&$\cdots$ \\
\hline
\rowcolor[gray]{.8}$\Sigma$&3&8&12&17&20&25&29&34&39&43&48&51&$\cdots$
\end{tabular}
\caption{\small{The first twelve non-zero $P$-positions for Splythoff Nim, alongside the sums
and differences of these positions.
}\label{tbl4}}
\end{table}

The option of splitting a pile places Splythoff Nim in the class of take-and-break games
such as Lasker's Nim~\cite{BCG}.
A version of Wythoff Nim that is somewhat similar to our game
involves 'splitting pairs', see~\cite{L}. This notion of splitting is
different from ours.	
Splythoff Nim is not an invariant game since forbidden moves depend
on positions. For instance,
it is possible to move from $(4,7)$ to $(1,2)$ but it is not possible to move from
$(3,5)$ to $(0,0)$.

If we code the $A$ and $B$ sequences by an infinite word of zeroes and ones then we get
\begin{equation}\label{ABword}
01001001001010010010010010010010100100100100100101001001\cdots.
\end{equation}
It turns out that this sequence cannot be produced from a substitution on two letters,
because the frequency of zeroes in the sequence is not a quadratic number. It is a cubic,
as we shall see later. However, it is possible to \emph{code} the infinite word by the
Tribonacci substitution $0\mapsto 01,\ 1\mapsto 02,\ 2\mapsto 0$ which gives the Tribonacci word
\begin{equation}\label{codedword}
010201001020101020100102010201001020101020100102010010201	\cdots.
\end{equation}
If we recode this sequence by deleting all $2$'s then we get the infinite word corresponding
to $A$ and $B$ in the table. This follows from Theorem~\ref{codingnu} below,
combined with our main result:

\begin{theorem}\label{thm1}
Let  $x_i, y_i, z_i$ be the locations of the $i$-th $0,1,2$, respectively,
in the Tribonacci word.
Then the $i$-th $P$-position in Splythoff's Nim $(a_i,b_i)$ is given by
$a_i=y_i-x_i$ and $b_i=z_i-y_i$.
\end{theorem}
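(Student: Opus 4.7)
The plan is to verify Theorem~\ref{thm1} by the standard two-pronged argument. Set $\mathcal{P}^{*}=\{(0,0)\}\cup\{(a_i,b_i):i\ge 1\}$, treating positions as unordered pairs $s\le t$. I need to show (I) no legal Splythoff move from $\mathcal{P}^{*}$ lands in $\mathcal{P}^{*}$, and (II) every position outside $\mathcal{P}^{*}$ admits a move into $\mathcal{P}^{*}$. The heart of the proof is a package of four structural facts about the candidate set: (S1) the sequences $(a_i)$ and $(b_i)$ are strictly increasing with $a_i<b_i$ for every $i$; (S2) the sets $\{a_i\}$ and $\{b_i\}$ partition $\mathbb{N}_{>0}$; (S3) the difference sequence $\Delta_i=b_i-a_i$ and the sum sequence $\Sigma_i=a_i+b_i$ are each strictly increasing; and (S4) $\{\Delta_i\}$ and $\{\Sigma_i\}$ also partition $\mathbb{N}_{>0}$. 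These are purely properties of the Tribonacci-derived sequences, decoupled from the game.

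Granted (S1)--(S4), the verification of (I) is short. A single move alters exactly one pile, so by (S1)--(S2) it cannot map $(a_i,b_i)$ to another element of $\mathcal{P}^{*}$. A double move preserves $b-a$, so by (S3) it cannot take $(a_i,b_i)$ to $(a_j,b_j)$ with $j\ne i$. A split from $(a_i,b_i)$ yields a position whose coordinates sum to $\Delta_i$, and by (S4) no $\Sigma_j$ matches a $\Delta_i$, so the result lies outside $\mathcal{P}^{*}$. For (II), take $(s,t)\notin\mathcal{P}^{*}$ with $s\le t$. The trivial cases $s=0$ and $s=t$ both reach $(0,0)$ in one move. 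Otherwise $0<s<t$, and by (S2) either $s=b_i$ for some $i$ --- in which case the single move $(s,t)\to(a_i,s)$ reaches $(a_i,b_i)$ --- or $s=a_i$ for some $i$. In the latter case, if $t>b_i$ the single move $(s,t)\to(a_i,b_i)$ suffices. The delicate remaining case is $s=a_i<t<b_i$: set $d=t-s$, so $0<d<\Delta_i$, and by (S4), $d$ equals some $\Delta_j$ or some $\Sigma_j$. If $d=\Delta_j$, then by (S3) $j<i$, hence $a_j<a_i=s$, so the double move $(s,t)\to(a_j,b_j)$ is legal. If $d=\Sigma_j$, then $d\ge \Sigma_1=3$, so the split $(s,t)\to(a_j,b_j)$ is legal.

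The main obstacle is proving (S2) and (S4); (S1) and (S3) follow from monotonicity of $x_i,y_i,z_i$ in the Tribonacci word. The Tribonacci substitution $0\mapsto 01$, $1\mapsto 02$, $2\mapsto 0$ has Perron eigenvalue $\tau$ satisfying $\tau^{3}=\tau^{2}+\tau+1$, and a direct computation using this relation gives $1/(\tau^{2}-\tau)+1/(\tau^{3}-\tau^{2})=1$ as well as $1/(\tau^{3}-\tau)+1/(\tau^{3}-2\tau^{2}+\tau)=1$, identifying $(A,B)$ and $(\Delta,\Sigma)$ as complementary Beatty pairs in the asymptotic sense. To upgrade these density statements into the exact integer partitions (S2) and (S4) I would either invoke the Ito--Yasutomi algorithm from the introduction to realise $x_i,y_i,z_i$ as non-homogeneous Beatty sequences and then apply the Skolem--Fraenkel complementarity criterion, or argue combinatorially by induction on substitution levels, exploiting the fact that $\sigma$ renormalises the prefix structure of the Tribonacci word in a manner predictable for the differences and sums. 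This number-theoretic step is the real difficulty; the game-theoretic verification above is essentially bookkeeping.
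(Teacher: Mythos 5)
Your reduction of the theorem to the four structural facts (S1)--(S4), and the ensuing case analysis for (I) and (II), is sound and is essentially the paper's Lemma~\ref{lem2} in different clothing: the paper packages the same information as a $\mathrm{mex}$-rule, $\delta_{i+1}=\mathrm{mex}(\Delta_i\cup\Sigma_i)$ and $a_{i+1}=\mathrm{mex}(A_i\cup B_i)$, which is equivalent to your partition-plus-monotonicity hypotheses, and then checks the same three move types (single, double, split). One small repair is needed even here: your claim that (S1) and (S3) ``follow from monotonicity of $x_i,y_i,z_i$'' is not right as stated, since monotonicity of the position sequences does not imply monotonicity of their differences; you need positive lower bounds on the column-to-column steps of the difference and double-difference tables, which the paper extracts from the substitution lengths $\ell^h(i)$ (minimal step $2^h$, attained only in columns headed by $k-1$).

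The genuine gap is (S2) and (S4). You correctly identify them as the heart of the matter, but you do not prove them, and your primary proposed route cannot work. The sequences $y_i-x_i$, $z_i-y_i$, $b_i-a_i$ and $a_i+b_i$ are not (non-homogeneous) Beatty sequences: the relevant frequencies are cubic irrationals, the Tribonacci word is not Sturmian, and the symbolic discrepancy $t^j_n-n\lambda^j$ is bounded but does not lie in an interval of length one (the paper cites Dekking--Shallit--Sloane for the sharp, wider bounds). So the Ito--Yasutomi and Skolem--Fraenkel machinery, which characterizes complementary pairs of the form $\lfloor n\alpha+\gamma\rfloor$, does not apply, and your density identities $1/\alpha+1/\beta=1$ only show that the two sequences have complementary frequencies --- far short of an exact partition of $\mathbb{N}$. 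Your second suggestion, induction on substitution levels, is the viable one and is what the paper actually carries out, but it is not bookkeeping: it needs the palindromic prefix structure $w_{j+1}=w_j\,j\,w_j$, the inclusions $X^{j+1}\pm 2^j\subseteq X^j$ and their analogues for the difference and double-difference rows, disjointness via minimal-step estimates, covering via maximal-step estimates, and, for (S4), a return-word computation showing that the set missed by the difference rows is exactly the sum row $\Sigma$. Without that development the proof is incomplete.
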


A word on notation. From now on, we reserve capital letters for sets or sequences and small letters for
numbers. For instance, we write $A$ for the sequence of first coordinates of $P$-positions $a_i$ in
Splythoff Nim. The theorem says that $A=Y-X$ and $B=Z-Y$ if $X,Y,Z$ are the locations of the letters in
the Tribonacci word. In particular, the Tribonacci word has the remarkable property that the differences
$Y-X$ and $Z-Y$ are complementary, as observed by Duch\^ene and Rigo~\cite[Cor. 3.6]{DR}. 
It turns out that this is a property of all $k$-bonacci words.

Theorem~\ref{thm1} is an analogy of the description of Table~\ref{tbl1} as locations in the Fibonacci word.
We already saw Wythoff gave two other descriptions of this table.
His first description generated the columns recursively. In the next section, we will do that for Table~\ref{tbl4}.

$A$ is sequence \href{https://oeis.org/A140100}{A140100} and $B$ is sequence \href{https://oeis.org/A140101}{A140101} in the OLEIS. The sequences
arise from the Greedy Queens in a spiral problem, which has recently been
solved by Dekking, Shallit, and Sloane~\cite{DSS}.
They study the Greedy Queen in a spiral game, which is different from Splythoff Nim, but
has the same $P$-positions.

\section{A mex rule for the $P$-positions}

We will recursively generate the columns in the table of $P$-positions
of Splythoff Nim. We introduce some notation.
The minimal excluded value $\mathrm{mex}(S)$ of a proper subset $S\subset \mathbb N$
is the least element of the complement $\mathbb N\setminus S$.
Furthermore, $S_i$ denotes the subset of the least $i$ elements of $S$ and
$s_i$ denotes the $i$-th element of $S$.
In particular, $S_0$ is the empty set and $S_i=\{s_1,\ldots,s_i\}$.
In this notation,
Wythoff's first method to generate his table is $a_{i+1}=\mathrm{mex}(A_i\cup B_i),\
b_{i+1}=a_i+i+1$.
We will show below that the columns
of Table~\ref{tbl4} are generated by
\begin{equation}\label{mexrule}
\begin{array}{rl}
\delta_{i+1}=&\mathrm{mex}\left(\Delta_i\cup\Sigma_i\right)\\
a_{i+1}=&\mathrm{mex}\left(A_i\cup B_i\right)\\
b_{i+1}=&a_{i+1}+\delta_{i+1}\\ \sigma_{i+1}=&a_{i+1}+b_{i+1}.
\end{array}
\end{equation}
All four sequences are strictly increasing.
Since $A$ and $\Delta$ are defined by the $\mathrm{mex}$, both $\{\Delta, \Sigma\}$
and $\{A, B\}$ partition $\mathbb N$.
\medbreak

Splythoff Nim is a two-pile game with positions $(a,b)$ in which
$a$ is the number of counters on the smallest pile.
We write $\delta(a,b)=b-a$ and $\sigma(a,b)=a+b$.
Suppose there is a move from $(a,b)$ to $(a',b')$.
If the move is a single, then either $a$ or $b$ is untouched
and $\{a,b\}\cap\{a',b'\}$ is non-empty. If the move is a double,
then $\delta(a,b)=\delta(a',b')$. If the move is a split,
then $\delta(a,b)=\sigma(a',b')$.
We summarize this as a lemma.

\begin{lemma}\label{lem1}
If there is a move from $(a,b)$ to $(a',b')$ then
$\{a,b\}\cap\{a',b'\}$ or $\{\delta(a,b),\sigma(a,b)\}\cap
\{\delta(a',b'),\sigma(a',b')\}$ is non-empty.
\end{lemma}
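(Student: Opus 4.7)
The plan is to verify the conclusion by a direct case analysis on the three move types allowed in Splythoff Nim, following exactly the classification sketched in the paragraph just before the statement.

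First I would handle a \emph{single} move. By definition this removes counters from only one pile, so either $a=a'$ or $b=b'$ (depending on which pile was touched), giving $\{a,b\}\cap\{a',b'\}\neq\emptyset$ immediately.

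Next, a \emph{double} move removes an equal positive number $k$ of counters from both piles, so $a'=a-k$ and $b'=b-k$. Then $\delta(a',b')=b'-a'=b-a=\delta(a,b)$, so the second intersection is non-empty.

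Finally, a \emph{split} move. Here the rule is that one first takes an equal number $k$ of counters from both piles \emph{and} one of the piles becomes empty, after which the remaining pile is split. Writing $a\leq b$, the equal-removal step leaves $(a-k,b-k)$, and for splitting to be allowed we need $a-k=0$, i.e.\ $k=a$. The remaining pile then contains $b-a=\delta(a,b)$ counters, and the split produces $(a',b')$ with $a'+b'=b-a$. Hence $\sigma(a',b')=\delta(a,b)$, so $\{\delta(a,b),\sigma(a,b)\}\cap\{\delta(a',b'),\sigma(a',b')\}\neq\emptyset$.

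There is essentially no obstacle here; the only subtlety is to remember that splits are constrained to occur only after a double that empties a pile, which forces $k=a$ and makes the identity $\sigma(a',b')=\delta(a,b)$ work out exactly. Since every legal move falls into one of these three categories, the lemma follows.
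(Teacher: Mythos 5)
Your proof is correct and follows essentially the same route as the paper, which establishes the lemma by the identical three-way case analysis (single preserves a coordinate, double preserves $\delta$, split sends $\delta(a,b)$ to $\sigma(a',b')$) in the paragraph preceding the statement. Your added detail that the split forces $k=a$ and hence $\sigma(a',b')=b-a$ is a welcome elaboration but not a different argument.
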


Observe that $\{a,b\}$ is a multiset if $a=b$ and that
$\{\delta(a,b),\sigma(a,b)\}$ is a multiset if $a=0$.
These are the positions with a move to $(0,0)$.

Let $\mathcal P$ denote the set of $P$-positions
in an arbitrary impartial game.
The standard method to construct $\mathcal P$
is by recursion.
Let $\mathcal P_0$ be the positions with no move.
Let $\mathcal M_0$ be the positions with a move to $\mathcal P_0$.
Remove $\mathcal M_0$ from the game.
Let $\mathcal P_1$ be the positions with no move in this reduced game.
Let $\mathcal M_1$ be the positions with a move to $\mathcal P_0\cup\mathcal P_1$.
Remove $\mathcal M_1$ from the game, etc.
The union of all $\mathcal P_i$ is equal to $\mathcal P$.

In Splythoff Nim, $\mathcal P_0=\{(0,0)\}$ and $\mathcal M_0$
contains exactly those positions in which $\{a,b\}$ or $\{\delta,\sigma\}$
is a multi-set.
If we remove $\mathcal M_0$ from the game, then positions $(m,n)$
remain such that $0<m<n$.
In the reduced game, there is no move from $(1,2)$ but there is a move
from each other position. Therefore $\mathcal P_1=\{(1,2)\}$, which is the
first non-zero $P$-position and also equal to $(a_1,b_1)$ as generated
by the mex-rule of equation~\ref{mexrule}.

\begin{lemma}\label{lem2}
Let $a_i$ and $b_i$ be the $i$-th element generated by the mex-rule for $A$ and $B$.
Then $\mathcal P_i=\{(a_i,b_i)\}$.
\end{lemma}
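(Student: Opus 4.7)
The plan is to induct on $i$, the case $i = 1$ being handled in the paragraph preceding the lemma. For the inductive step assume $\mathcal{P}_j = \{(a_j, b_j)\}$ for every $j \leq i$, and aim to prove $\mathcal{P}_{i+1} = \{(a_{i+1}, b_{i+1})\}$. I will carry along the induction several bookkeeping facts about the sequences in~(\ref{mexrule}): each of $A$, $B$, $\Delta$, $\Sigma$ is strictly increasing, $A_i \cap B_i = \emptyset$, $\Delta_i \cap \Sigma_i = \emptyset$, and crucially $\{1,\ldots,a_{i+1}-1\} \subseteq A_i \cup B_i$ and $\{1,\ldots,\delta_{i+1}-1\} \subseteq \Delta_i \cup \Sigma_i$. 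These follow immediately from the defining mex together with the identities $b_k = a_k + \delta_k$ and $\sigma_k = a_k + b_k$.

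To show $(a_{i+1}, b_{i+1}) \in \mathcal{P}_{i+1}$, I verify there is no move from $(a_{i+1}, b_{i+1})$ to any previously classified $(a_j, b_j)$ with $j \leq i$. By Lemma~\ref{lem1}, such a move requires $\{a_{i+1}, b_{i+1}\} \cap \{a_j, b_j\} \neq \emptyset$ or $\{\delta_{i+1}, \sigma_{i+1}\} \cap \{\delta_j, \sigma_j\} \neq \emptyset$. The mex rule gives $a_{i+1} \notin A_i \cup B_i$ and $\delta_{i+1} \notin \Delta_i \cup \Sigma_i$ directly, while strict monotonicity together with the trivial bounds $b_{i+1} > a_{i+1} > a_i$ and $\sigma_{i+1} > \delta_{i+1} > \delta_i$ rules out each of the remaining collisions.

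For uniqueness, suppose $(m, n)$ with $0 < m \leq n$ has survived every removal through stage $i$ and is distinct from $(a_{i+1}, b_{i+1})$; I will produce a move from $(m, n)$ to some $(a_j, b_j)$ with $j \leq i$, a contradiction. The case split is driven by where $m$ lies in $A \cup B$ and where $n - m$ lies in $\Delta \cup \Sigma$. When $m = a_j \in A_i$ with $n > b_j$, a single move shrinks the $n$ pile to $b_j$; when $m = b_j \in B_i$, a symmetric single move shrinks $n$ down to $a_j$. When $n - m = \delta_j \in \Delta_i$ with $m > a_j$, a double move of size $m - a_j$ reaches $(a_j, b_j)$. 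When $n - m = \sigma_j \in \Sigma_i$, a double move of size $m$ clears one pile and a split of the remaining pile of size $a_j + b_j$ gives $(a_j, b_j)$. The filling-from-below property ensures at least one of these cases applies: if $m < a_{i+1}$ then $m \in A_i \cup B_i$, and if $n - m < \delta_{i+1}$ then $n - m \in \Delta_i \cup \Sigma_i$, while the remaining possibility $m \geq a_{i+1}$ together with $n - m \geq \delta_{i+1}$, combined with monotonicity, forces $(m, n) = (a_{i+1}, b_{i+1})$.

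The main obstacle lies in this uniqueness step: in each case one must verify both that the prescribed move is legal under Splythoff's rules and that it actually reaches the intended $(a_j, b_j)$ with $j \leq i$. The split case is the most delicate, because a split is permitted only when the preceding double empties one pile entirely, so its use forces the double to have size exactly $m$, and one must also check that the resulting pile of size $n - m$ is split into the correct partition $(a_j, b_j)$ rather than some other one.
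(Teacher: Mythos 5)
Your inductive step misreads what membership in $\mathcal P_{i+1}$ requires, and this creates two genuine gaps. First, showing that $(a_{i+1},b_{i+1})$ has no move to any $(a_j,b_j)$ with $j\leq i$ only establishes that it survives the removals, i.e.\ that it is not in $\mathcal M_i$. To conclude $(a_{i+1},b_{i+1})\in\mathcal P_{i+1}$ you must additionally show that it has \emph{no move at all in the reduced game}, i.e.\ that every move from $(a_{i+1},b_{i+1})$ lands in an already-removed position. The paper does this explicitly: any move either decreases $a_{i+1}$ (landing in $A_i\cup B_i$ by the mex), decreases $\delta_{i+1}$ (landing in $\Delta_i\cup\Sigma_i$), or fixes both and pushes the larger pile below $a_{i+1}$. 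You omit this verification entirely.

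Second, and more seriously, your uniqueness step attempts to prove a false statement: that every survivor other than $(a_{i+1},b_{i+1})$ has a move to some $(a_j,b_j)$ with $j\leq i$. The position $(a_{i+2},b_{i+2})$ is a counterexample: the same mex and monotonicity facts you invoke show it has no move to any earlier $(a_j,b_j)$, so it survives through stage $i$, yet it satisfies $m\geq a_{i+1}$ and $n-m\geq\delta_{i+1}$ without equalling $(a_{i+1},b_{i+1})$. Hence your final claim that these two inequalities ``force $(m,n)=(a_{i+1},b_{i+1})$'' is wrong, and the intended contradiction never materializes. What must be shown instead is that every other survivor has a move to \emph{another survivor} (so it has a legal move in the reduced game and therefore is not in $\mathcal P_{i+1}$); the paper gets this from $m\geq a_{i+1}$ and $\delta(m,n)\geq\delta_{i+1}$, which give $n\geq b_{i+1}$, whence either the double to $(m-1,n-1)$ stays in the complement or there is a single to $(a_{i+1},b_{i+1})$ itself. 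A further, secondary issue: your catalogue of moves from positions satisfying the membership conditions only treats the favourable subcases ($m=a_j$ with $n>b_j$, and $n-m=\delta_j$ with $m>a_j$); the subcases where the matching coordinate is the larger one, or where $m<a_j$, need the chained mex argument that the paper carries out and cannot simply be asserted.
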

\begin{proof}
Assuming that the statement is true for all $\mathcal P_i$ up to $\mathcal P_k$ for a fixed $k\geq 1$. We
prove that it is true for $\mathcal P_{k+1}$.
Let $\mathcal M_k$ be the set of all $(m,n)$ with a move to $\mathcal P_i$ for some $i\leq k$.
We remove these positions from the game. The positions with no move in the reduced
game are in $\mathcal P_{k+1}$.
CLAIM: $\mathcal M_k$ is equal to the set of positions $(m,n)$ such that one of the following holds:
\begin{itemize}
\item[(i)] $\{m,n\}$ intersects $A_k\cup B_k$,
\item[(ii)] $\{\delta(m,n),\sigma(m,n)\}$ intersects $\Delta_k\cup\Sigma_k$.
\end{itemize}
Lemma~\ref{lem1} implies that if $(m,n)\in\mathcal M_k$, then (i) or (ii)
holds. We need to prove the converse.

If (i) holds, then one of the coordinates of $(m,n)$ is equal to a coordinate
in $(a_i,b_i)$ for some $i\leq k$.
If the other coordinate of $(m,n)$ is larger, then there is a single to $(a_i,b_i)$
and we are done. We may assume that the other coordinate is smaller.
If $\delta(m,n)<\delta(a_i,b_i)$ then there exists a $j<i$ such that
$\delta(m,n)=\delta(a_j,b_j)$ or $\delta(m,n)=\sigma(a_j,b_j)$ because
$\delta(a_i,b_i)$ is a mex.
In the first case, there is a double to $(a_j,b_j)$ and in the second
case there is a split.
So we are left with the case that $\delta(m,n)\geq\delta(a_i,b_i)$ and
the other coordinate of $(m,n)$ is smaller, which implies that
the equal coordinate is $n$ and $m<a_i$. Since $a_i$ is a mex, $m=a_j$
or $m=b_j$ for some $j<i$. Since $\delta(m,n)\geq \delta(a_i,b_i)>\delta(a_j,b_j)$,
there is a single from $(m,n)$ to $(a_j,b_j)$. Hence if (i) holds then
$(m,n)\in\mathcal M_k$.

Now consider case (ii). We may assume that (i) does not hold and therefore
$m>a_k$ by the mex-rule.
If $\delta(m,n)\in\{\delta_i,\sigma_i\}$ for some $i\leq k$ there is either
a double or a split from $(m,n)$ to $(a_i,b_i)$.
If $\sigma(m,n)\in\{\delta_i,\sigma_i\}$ then
we must have $\sigma(m,n)\leq\sigma_i$ which implies $b_i>n$ because $m>a_i$.
It follows that $\delta(m,n)<\delta_i$ and there is a double from $(m,n)$
to $(a_j,b_j)$ for some $i<j$.
All positions that satisfy (i) or (ii) have a move to a $P_j$ for some $j\leq k$.
The claim holds.

To finish the proof, we need to show that $(a_{k+1},b_{k+1})$ is the unique position that has no move
if we reduce the game by $\mathcal M_k$.
It follows from the mex-rule in equation~\ref{mexrule}
that $a_{k+1}$ and $\delta_{k+1}$ are not contained in $A_k\cup B_k$ or
$\Delta_k\cup \Sigma_k$, respectively. Since the sequences are increasing,
neither are $b_{k+1}$ and $\sigma_{k+1}$. Therefore, $(a_{k+1},b_{k+1}$
does not satisfy (i) or (ii) and therefore it is in the complement
of $\mathcal M_k$.
All moves from $(a_{k+1},b_{k+1})$ end up in $\mathcal M_k$.
Indeed, if a move reduces $a_{k+1}$ then it is in $A_k\cup B_k$ by the mex-rule.
Similarly, if it reduces $\delta_{k+1}$ then it is in $\Delta_k\cup\Sigma_k$.
If a move fixes both $a_{k+1}$ and $\delta_{k+1}$, then $b_{k+1}$ is reduced
to a number smaller than $a_{k+1}$, which is in $A_k\cup B_k$. Whatever
the move, the resulting position is in $\mathcal M_k$.
Therefore, $(a_{k+1},b_{k+1})\in\mathcal P_{k+1}$.

Let $(m,n)$ be any position in the complement of $M_k$
unequal to $(a_{k+1},b_{k+1})$.
We show that there exists a move from $(m,n)$
within the complement. We have that $m\geq a_{k+1}$ since $m\not\in A_k\cup B_k$.
and similarly that $\delta(m,n)\geq \delta_{k+1}$. It follows that $n\geq b_{k+1}$.
If $m>a_{k+1}$ then $n>b_{k+1}$. If we remove one counter from both piles,
then the resulting position $(m-1,n-1)$ still is in the complement.
If $m=a_{k+1}$ then $n>b_{k+1}$ and there is a single to $(a_{k+1},b_{k+1})$.
We conclude that there is a move in the complement.
\end{proof}

We conclude that the mex-rule generates the columns in Table~\ref{tbl4} of
the $P$-positions
in Splythoff Nim. We now proceed with the proof of Theorem~\ref{thm1}, which will
follow as a by-product of a more general property of the $k$-bonacci substitution.
Like Wythoff, we will be interested in tables in their own right.

\section{The positions table of the $k$-bonacci substitution}

The $k$-bonacci substitution $\theta_k$ on the alphabet $\{0,\ldots,k-1\}$ is given by
\[
\theta_k\colon\left\{
\begin{array}{rcll}
j&\mapsto& 0\:(j+1),&\mathrm{if }\ j<k-1\\
k-1&\mapsto& 0
\end{array}.\right.
\]
The $k$-bonacci word $\omega^{k}$ is the
unique fixed point of this substitution.
In our analysis, $k>2$ is fixed and we will often simply write $\theta$
and $\omega$ instead of $\theta_k$ and~$\omega^k$.
We start by recalling some well-known
properties of the $k$-bonacci word, before turning to the tables.

\begin{lemma}\label{lem3}
The letters $j>0$ are isolated in $\omega$, i.e, each is preceded and followed by a $0$.
\end{lemma}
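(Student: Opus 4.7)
The plan is to exploit the fixed-point equation $\omega = \theta(\omega)$ together with two elementary structural observations about the substitution. First I would note that every image $\theta(j)$ begins with the letter $0$, since $\theta(j) = 0\,(j+1)$ for $j < k-1$ and $\theta(k-1) = 0$. Second, every image has length at most two, and the only non-zero letter appearing anywhere in any image is the second letter of $\theta(j-1) = 0\,j$ for $1 \leq j \leq k-1$; the image $\theta(k-1) = 0$ contributes no non-zero letter at all.

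Writing $\omega = a_1 a_2 a_3 \cdots$ and applying $\omega = \theta(\omega) = \theta(a_1)\theta(a_2)\theta(a_3)\cdots$ gives a canonical decomposition of $\omega$ into consecutive blocks. Any occurrence of a non-zero letter $j$ in $\omega$ must therefore lie at position two of some block $\theta(a_i) = \theta(j-1) = 0\,j$. Its immediate predecessor in $\omega$ is then the $0$ at position one of the same block, and its immediate successor is the first letter of the next block $\theta(a_{i+1})$, which is again $0$ by the first observation. This would give both halves of the claim simultaneously.

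The only thing requiring a word of care is that the decomposition $\omega = \theta(a_1)\theta(a_2)\cdots$ is unambiguous and covers every position, so that the predecessor and successor blocks actually exist; but this is immediate from $\omega$ being the infinite fixed point of $\theta$ (a standard fact for primitive substitutions of this form). There is no genuine obstacle here — the proof is essentially a one-line inspection of the substitution rule once the block decomposition is set up.
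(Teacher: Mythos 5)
Your proof is correct and follows the same route as the paper: the paper's own (very terse) argument is exactly that each $j>0$ can only arise as the second letter of a block $\theta(j-1)=0\,j$ in the decomposition $\omega=\theta(\omega)$, hence is preceded by the $0$ in its own block and followed by the leading $0$ of the next block. You have simply written out in full the block-decomposition details that the paper leaves implicit.
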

\begin{proof}
It follows from the definition of $\theta$ that each $j>0$ is preceded by a $0$.
It has to be succeeded by $0$ as well, for the same reason.
\end{proof}

In fact, we can say more.	

\begin{lemma}\label{lem6}
In $\omega$ each $j$ occurs as the final letter of $\theta^j(0)$.
\end{lemma}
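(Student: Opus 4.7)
The plan is to prove by induction on $j \in \{0,1,\ldots,k-1\}$ that the last letter of the prefix $\theta^j(0)$ of $\omega$ is $j$. This is really two small observations glued together: first, that each $\theta^j(0)$ genuinely is a prefix of $\omega$, and second, that iterating $\theta$ on the one-letter word $0$ pushes the ``tail letter'' up by one at every step.

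First I would verify the prefix claim. Since $\theta(0) = 0(1)$ begins with $0$, applying $\theta^j$ to this inclusion shows that $\theta^j(0)$ is a prefix of $\theta^{j+1}(0)$. Iterating and passing to the limit, $\theta^j(0)$ is a prefix of $\theta^n(0)$ for every $n \geq j$, and hence a prefix of the fixed word $\omega$. Consequently, the letter at position $|\theta^j(0)|$ of $\omega$ equals the last letter of $\theta^j(0)$, so it suffices to identify the latter.

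For the induction, the base case $j=0$ is immediate: $\theta^0(0) = 0$ has last letter $0$. For the inductive step, assume the last letter of $\theta^j(0)$ is $j$ for some $0 \leq j < k-1$. Because $\theta$ acts letterwise on a word, the last letter of $\theta^{j+1}(0) = \theta(\theta^j(0))$ is the last letter of $\theta(j)$. By the definition of the $k$-bonacci substitution, $\theta(j) = 0(j+1)$, whose last letter is $j+1$. This closes the induction and covers all letters $j \in \{0,\ldots,k-1\}$ of the alphabet.

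There is no real obstacle here; the only mild subtlety is making explicit that ``the last letter of $\theta(w)$ equals $\theta$ applied to the last letter of $w$'' whenever the image of that last letter is nonempty, which is true for every $k$-bonacci image. Once this observation is stated, the lemma reduces to one line of induction.
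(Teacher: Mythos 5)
There is a genuine gap: you have proved a strictly weaker statement than the one the lemma asserts and the paper later relies on. Your argument shows only that the \emph{prefix} $\theta^j(0)$ of $\omega$ ends in the letter $j$, i.e.\ that the single occurrence of $j$ at position $2^j$ is the final letter of a copy of $\theta^j(0)$. But the lemma (read in parallel with Lemma~\ref{lem3} and Lemma~\ref{lem4b}, and as it is used in Lemmas~\ref{lem7} and~\ref{lem8}) is a statement about \emph{every} occurrence of the letter $j$ in the infinite word: each such occurrence must sit at the final position of an occurrence of the factor $\theta^j(0)$. That universal form is exactly what is needed later, e.g.\ to conclude that the $n$-th occurrence of $j$ lies in the $n$-th block of $\theta^{j+1}(\omega_1)\theta^{j+1}(\omega_2)\cdots$, and to deduce in Lemma~\ref{lem8} that $m\pm 2^j\in X^j$ for an arbitrary $m\in X^{j+1}$. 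Your prefix computation gives no information about occurrences of $j$ other than the first one.

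The missing idea is to exploit that, for $j>0$, the letter $j$ is produced by the substitution in exactly one way: $j$ appears in $\theta(i)$ only when $i=j-1$, since $\theta(i)=0\,(i+1)$ for $i<k-1$ and $\theta(k-1)=0$. Because $\omega=\theta(\omega)$, every occurrence of $j$ in $\omega$ is therefore the final letter of an occurrence of $\theta(j-1)=0j$, which is the image under $\theta$ of an occurrence of $j-1$. Inducting on $j$, that occurrence of $j-1$ is itself the final letter of an occurrence of $\theta^{j-1}(0)$, and applying $\theta$ shows the given occurrence of $j$ is the final letter of an occurrence of $\theta(\theta^{j-1}(0))=\theta^j(0)$. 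This is the paper's one-line induction; your letterwise computation of the last letter of $\theta^{j}(0)$ is correct as far as it goes, but it must be supplemented by this uniqueness-of-preimage argument to cover all occurrences.
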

\begin{proof}
Each $j$ occurs as the final letter of $0j=\theta(j-1)$, which by
induction is the suffix of $\theta(\theta^{j-1}(0))$.
\end{proof}

The length of $\theta^j(0)$ is $2^j$. We denote the prefix of $j$
by $w_{j}$, i.e., $\theta^j(0)=w_{j}j$.
If $j=0$ then it is the empty word $w_{0}=\epsilon$.

\begin{lemma}\label{lem4b}
In $\omega$ each $j$ is preceded and followed by $w_{j}$.
\end{lemma}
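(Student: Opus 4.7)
The plan is to prove Lemma~\ref{lem4b} by induction on $j$, exploiting the fact that $\omega$ is the fixed point of $\theta$, together with the defining identity $\theta^j(0)=w_j\cdot j$.

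For the base case $j=1$, I observe that $\theta(0)=01$ forces $w_1=0$, so the claim is exactly that every $1$ in $\omega$ is preceded and followed by $0$, which is Lemma~\ref{lem3}. For the inductive step I assume the statement for $j-1\geq 1$ and consider an arbitrary occurrence of the letter $j$ in $\omega$. Writing $\omega=a_0a_1a_2\cdots$ and using the fixed-point decomposition $\omega=\theta(a_0)\theta(a_1)\theta(a_2)\cdots$, every position in $\omega$ lies in a unique image block $\theta(a_n)$. Inspection of the substitution shows that the letter $j\geq 1$ appears in $\theta(a)$ only when $a=j-1$ (in which case it is the second letter of $\theta(j-1)=0j$), so the given occurrence of $j$ comes from a unique corresponding occurrence of $j-1$ in $\omega$.

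By the induction hypothesis, that occurrence of $j-1$ sits in the factor
\[
\cdots\,w_{j-1}\cdot(j-1)\cdot w_{j-1}\,\cdots
\]
of $\omega$. Applying $\theta$ to this factor produces the factor $\theta(w_{j-1})\cdot 0j\cdot \theta(w_{j-1})$ of $\omega$. On the other hand,
\[
\theta^j(0)=\theta\bigl(\theta^{j-1}(0)\bigr)=\theta\bigl(w_{j-1}\cdot(j-1)\bigr)=\theta(w_{j-1})\cdot 0j,
\]
and comparing with $\theta^j(0)=w_j\cdot j$ gives the key identity $\theta(w_{j-1})\cdot 0=w_j$. Thus $\theta(w_{j-1})$ is precisely $w_j$ with its terminal $0$ removed. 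Reading the factor above from left to right, the letters preceding $j$ spell $\theta(w_{j-1})\cdot 0=w_j$, which handles the prefix.

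For the suffix I need one extra letter beyond $\theta(w_{j-1})$: the first letter of $\theta(a_{n+1})$, where $a_{n+1}$ is whatever letter follows $w_{j-1}(j-1)$ in $\omega$. Since $\theta$ sends every letter to a word beginning with $0$, this next letter is $0$, so the letters following $j$ again spell $\theta(w_{j-1})\cdot 0=w_j$. The expected obstacle is the one subtle point in this chain, namely justifying the bijection between occurrences of $j$ and occurrences of $j-1$; this is the standard recognizability property, but here it is immediate because $j\geq 1$ only appears in $\theta(j-1)$ and only in the second position. Everything else is a direct calculation from the fixed-point equation.
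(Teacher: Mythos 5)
Your proof is correct and follows essentially the same inductive route as the paper: every $j>0$ arises only as the second letter of $\theta(j-1)=0j$, the induction hypothesis places that $j-1$ inside $w_{j-1}\,(j-1)\,w_{j-1}$, and the identity $\theta(w_{j-1})0=w_j$ finishes the argument. The only (harmless) divergences are that you obtain the trailing $0$ after the second $\theta(w_{j-1})$ from the fact that every image block $\theta(a)$ begins with $0$, whereas the paper notes that $\theta(w_{j-1})$ ends in $1$ and invokes Lemma~\ref{lem3}, and that the letter you call $a_{n+1}$ should be the one following the entire factor $w_{j-1}(j-1)w_{j-1}$ rather than the one following $w_{j-1}(j-1)$.
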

\begin{proof}
The case $j=0$ is trivial.
Each $j > 0$ can only occur as the final letter of $\theta(j-1)$. Assume for induction that
$j-1$ is preceded and followed by $w_{j-1}$. Therefore, $j$ can only occur in
\[
\theta(w_{j-1})0j\theta(w_{j-1}).
\]
The final letter of $w_{j-1}$ is $0$ and therefore the final letter of
$\theta(w_{j-1})$ is $1$, which is followed by a $0$. Therefore, each
$j$ can only occur in
\[
\theta(w_{j-1})0j\theta(w_{j-1})0.
\]
We see that
\[
w_jj = \theta^j(0) = \theta(\theta^{j-1}(0)) = \theta(w_{j-1}(j-1)) = \theta(w_{j-1})0j
\]
and so $\theta(w_{j-1})0 = w_j$. Each $j$ is preceded and followed by $w_j$.\end{proof}

Each $w_j$ can be recursively defined by
\[
w_{j+1}=w_{j}jw_{j}.
\]
This follows from \[w_{j+1}=\theta(w_{j})0=
\theta(w_{j-1}(j-1)w_{j-1})0=
\theta(w_{j-1})0j\theta(w_{j-1})0=w_jjw_j.\]
In particular, $w_j$ is a palindrome. We saw that each $j$ is preceded \emph{and} followed
by $w_j$. Conversely, each $w_j$ is preceded \emph{or} followed by $j$.

\begin{lemma}\label{lem4c}
In $\omega$ each $w_j$ is a prefix or a suffix of a $j$.
\end{lemma}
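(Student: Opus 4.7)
My plan is to prove the lemma by induction on $j$, using the natural decomposition $\omega=\theta(\omega)$ to desubstitute each occurrence of $w_{j+1}$ into an occurrence of $w_j$.

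The base case $j=0$ amounts to showing that in any pair of consecutive letters of $\omega$ at least one is $0$; this is immediate from Lemma~\ref{lem3}, which forces every letter $\ell>0$ to have $0$'s on both sides. (The empty word $w_0=\epsilon$ is then trivially adjacent to such a $0$.)

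The inductive step rests on an alignment observation. Each $\theta$-block $\theta(\ell)$ is either $0(\ell+1)$ (for $\ell<k-1$) or $0$ (for $\ell=k-1$); in either case it starts with $0$ and contains no other $0$. Hence the $0$'s of $\omega$ mark exactly the block boundaries of the natural factorization. Since $w_{j+1}=\theta(w_j)\cdot 0$ both begins and ends with $0$, every occurrence of $w_{j+1}$ in $\omega$ must begin at a block boundary, and its trailing $0$ must itself be the first letter of the following block. Injectivity of $\theta$ on letters then identifies the prefix $\theta(w_j)$ of this occurrence as the image of a factor of $\omega$ of length $|w_j|$, which must equal $w_j$. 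So each occurrence of $w_{j+1}$ at image position $p$ descends to a genuine occurrence of $w_j$ at a preimage position $q$.

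I would then feed this preimage $w_j$ into the induction hypothesis. If it is immediately followed by $j$ in $\omega$, then the next $\theta$-block is $\theta(j)=0(j{+}1)$, whose leading $0$ is precisely the trailing $0$ of $w_{j+1}$, so the letter just after $w_{j+1}$ is $j{+}1$. If instead $w_j$ is immediately preceded by $j$, then the preceding block $\theta(j)$ ends with $j{+}1$, placing $j{+}1$ immediately before $w_{j+1}$. Either way, $w_{j+1}$ sits next to a $j{+}1$, and the edge case where $w_j$ begins at the very start of $\omega$ is handled by the first sub-case, since $\omega$ itself begins with $\theta^j(0)=w_j\,j$. I expect the alignment step to be the only real obstacle; once the ``one $0$ per $\theta$-block'' fact forces every occurrence of $w_{j+1}$ to respect the $\theta$-decomposition, the rest is just bookkeeping about the blocks $\theta(j)=0(j{+}1)$.
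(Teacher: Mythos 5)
Your proof is correct and follows essentially the same route as the paper's: an induction on $j$ that uses the decomposition $w_{j+1}=\theta(w_j)0$ to desubstitute an occurrence of $w_{j+1}$ to an occurrence of $w_j$ and then invokes the induction hypothesis, with the block $\theta(j)=0(j{+}1)$ supplying the adjacent $j{+}1$. The only real difference is that you justify the alignment of $w_{j+1}$ with the $\theta$-block structure explicitly (via the ``one $0$ per block'' observation), a step the paper's proof leaves implicit.
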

\begin{proof}
By induction. We have that $w_{j}=\theta(w_{j-1})0$. If $w_{j}$ is followed by $j$ we are done.
Say it is followed by an $i$ unequal to $j$.
Then $w_ji=\theta(w_{j-1})0i=\theta(w_{j-1}(i-1))$. Since $w_{j-1}$ is followed by $i-1$
it must be preceded by $j-1$, which implies that $w_j$ is preceded by $j$.
\end{proof}

Note that it can happen that $w_j$ is both a prefix and a suffix. Since
$00$ occurs in $\omega$, so does $0101$ and here $w_1=0$ occurs as a prefix and
a suffix of~$1$.

Lemma~\ref{lem6} implies that the $n$-th occurrence
of $j$ in $\omega=\omega_1\omega_2\omega_3\cdots$
is in the $n$-th word of
$\theta^{j+1}(\omega_1)\theta^{j+1}(\omega_2)\theta^{j+1}(\omega_3)\cdots$.
For a letter $j$ we will say that the distance between consecutive
positions of $j$ are \textit{steps}.
The steps between $j$'s are equal to the lengths of the
$\theta^{j+1}(\omega_i)$. This is illustrated in the positions
table for the Quadribonacci word below.

\begin{table}[ht]
{\fontsize{8}{10}\selectfont
\begin{tabular}{c|ccccccccccccccccc}
\rowcolor[gray]{.8}$\omega^4$
     &0&1 &0 &2 &0 &1 &0 &3  &0  &1  &0  &2  &0  &1  &0  &0&\\
\hline
$X^0$&1&3 &5 &7 &9 &11&13&15 &16 &18 &20 &22 &24 &26 &28 &30\\
$X^1$&2&6 &10&14&17&21&25&29 &31 &35 &39 &43 &46 &50 &54 &58\\
$X^2$&4&12&19&27&33&41&48&56 &60 &68 &75 &83 &89 &97 &104&112\\
$X^3$&8&23&37&52&64&79&93&108&116&131&145&160&172&187&201&216
\end{tabular}}
\\[9pt]
	\caption{\small The positions table of the Quadribonacci word.
The steps between columns are equal to
$(2,4,8,15),(2,4,7,14),(2,3,6,12),(1,2,4,8)$, depending
on the letter in the top row. If we add or subtract $4$
in the fourth row, then we get entries in the third row, etc. Equivalent properties
hold for all $k$-bonacci tables.
}
	\label{tbl2a}
\end{table}

The difference between the fourth and the
fifth column (and the twelfth and the thirteenth)
is $(2,3,6,12)$ because $2$ is the fourth letter (and twelfth)
in~$\omega^4$. For the positions table of the general $k$-bonacci word,
the result is as follows.

\begin{lemma}\label{lem7}
Let $\ell^j(i)$ be the length of $\theta^{j+1}(i)$. The positions table
can be generated from the vector valued substitution
\begin{equation}\label{eq3}
\nu\colon
i\mapsto
\left[
\begin{array}{c}
\ell^0(i)\\
\ell^1(i)\\
\vdots\\
\ell^{k-1}(i)
\end{array}
\right].
\end{equation}
The initial column of the table contains increasing powers of $2$, starting from $2^0$,
and the $n+1$-th column is generated from the $n$-th column by adding
$\nu(\omega^k_n)$.
\end{lemma}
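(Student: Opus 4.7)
The plan is to reduce the lemma to a single structural claim: for every letter $i$ and every $j$ with $0 \leq j \leq k-1$, the word $\theta^{j+1}(i)$ contains exactly one occurrence of the letter $j$, and this occurrence sits at position $2^j$ within the word. Granted this, both the initial column and the step formula fall out from the identity $\omega = \theta^{j+1}(\omega)$.

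For the initial column, since $\omega$ is fixed by $\theta$ its prefix of length $2^j$ is $\theta^j(0)$; Lemma~\ref{lem6} gives that this prefix ends in $j$, and no $j$ can appear earlier in it because $\theta^{j-1}(0)$ only uses the letters $0, 1, \ldots, j-1$. Hence the first occurrence of $j$ in $\omega$ is at position $2^j$, which delivers the initial column $(2^0, 2^1, \ldots, 2^{k-1})$.

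For the structural claim itself I would lean on two observations. First, $\theta(i)$ always starts with $0$, so iterating, $\theta^{j+1}(i)$ starts with $\theta^j(0)$; by Lemma~\ref{lem6} this initial segment already carries one $j$, located at its final position $2^j$. Second, the letter $j$ appears in $\theta(m)$ if and only if $m = j-1$, and then exactly once. Iterating, the number of $j$'s in $\theta^{j+1}(i)$ equals the number of $0$'s in $\theta(i)$, which is $1$ for every $i$. Combining the two, the unique $j$ in $\theta^{j+1}(i)$ sits at position $2^j$, independent of $i$. To conclude, decompose $\omega = \theta^{j+1}(\omega_1)\theta^{j+1}(\omega_2)\cdots$; the $n$-th $j$ in $\omega$ lies in the $n$-th block at local position $2^j$, so $X^j_{n+1} - X^j_n$ equals the length of what remains of the $n$-th block after that $j$, namely $|\theta^{j+1}(\omega_n)| - 2^j$, plus the initial $2^j$ letters of the $(n+1)$-th block, which sums to $\ell^j(\omega_n)$. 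Stacking coordinates $j = 0, \ldots, k-1$ delivers the vector identity: the $(n+1)$-th column equals the $n$-th column plus $\nu(\omega_n)$.

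The main obstacle is really just the position part of the structural claim --- one must see that the unique $j$ in $\theta^{j+1}(i)$ sits at the \emph{same} local position $2^j$ for every $i$. This is what forces the increments to depend only on $\omega_n$ and lets the telescope close; once the common prefix $\theta^j(0)$ is pinned down by the observation that $\theta(i)$ starts with $0$, and $\theta^{j+1}(i)$ is known to carry only a single $j$, the rest is bookkeeping.
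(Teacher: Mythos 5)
Your proof is correct and follows essentially the same route as the paper: decompose $\omega$ as $\theta^{j+1}(\omega_1)\theta^{j+1}(\omega_2)\cdots$ and observe that each block $\theta^{j+1}(i)$ contains exactly one $j$, always at local position $2^j$, so that the steps in row $X^j$ are the block lengths $\ell^j(\omega_n)$ and the first column is $(2^0,\ldots,2^{k-1})$. You simply spell out the occurrence-counting and common-prefix arguments that the paper's one-line proof leaves implicit via Lemma~\ref{lem6}.
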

\begin{proof}
By Lemma~\ref{lem6} the letter $j$ first occurs in $\theta^{j+1}(\omega_0^k)$ at position $2^j$.
The first column of the table therefore contains the powers of two. Since $j$ occurs
at the same location in each $\theta^{j+1}(i)$, independent of $i$, each row $X^j$ has step
sizes $\ell^j(i)$.
\end{proof}
The length $\ell^j(i)$ is minimal if $i=k-1$, in which case the length is equal to $2^{j}$.
The other lengths are greater than $2^j$.
Therefore $2^j$ is the minimal step in the $j$-th row $X^j$
and it occurs for columns headed by
$k\!-\!1$.

\begin{lemma}\label{lem8}
Both $X^{j+1}-2^{j}$ and $X^{j+1}+2^j$ are subsets of $X^j$
and their union is equal to $X^j$.
An element of $X^j$ can be written as $m+2^j$ and $n-2^j$ exactly
if $m$ is taken from a $k\!-\!1$-column in the positions table, and
$n$ is its successor.
\end{lemma}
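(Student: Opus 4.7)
My plan is to prove both containments of the first claim directly from Lemmas~\ref{lem4b} and \ref{lem4c}, and then extract the intersection characterization from the minimum-step observation following Lemma~\ref{lem7}. The key arithmetic fact I will lean on is that $w_j$ has length $2^j-1$, so $w_{j+1}=w_j\,j\,w_j$ is a palindrome of length $2^{j+1}-1$ whose central letter $j$ sits at distance exactly $2^j$ from either end of the palindrome.

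For the inclusions $X^{j+1}\pm 2^j\subseteq X^j$, I take any $j+1$ at position $q$; by Lemma~\ref{lem4b} it is immediately preceded and immediately followed by an occurrence of $w_{j+1}$, whose central $j$'s land at $q-2^j$ and $q+2^j$, so both shifts of $X^{j+1}$ lie in $X^j$. For the reverse inclusion, I take a $j$ at position $p$; by Lemma~\ref{lem4b} it is flanked on both sides by $w_j$, so the length-$(2^{j+1}-1)$ window centered at $p$ is exactly $w_j\,j\,w_j=w_{j+1}$. Lemma~\ref{lem4c} then forces this occurrence of $w_{j+1}$ to be preceded or followed by $j+1$, placing a $j+1$ at $p-2^j$ or at $p+2^j$ and putting $p$ into $(X^{j+1}-2^j)\cup(X^{j+1}+2^j)$.

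For the intersection claim, if $p$ lies in both $X^{j+1}-2^j$ and $X^{j+1}+2^j$, then $m=p-2^j$ and $n=p+2^j$ are both entries of $X^{j+1}$ with $n-m=2^{j+1}$. The remark after Lemma~\ref{lem7} identifies $2^{j+1}$ as the minimum step of $X^{j+1}$, attained exactly on columns headed by $k-1$; hence $n$ must be the successor of $m$ and $m$ must sit under a $k-1$ in $\omega^k$. The converse runs the same arithmetic backwards. I expect the only real obstacle to be keeping the palindromic length bookkeeping consistent, which is why I would single out the identity $|w_j|=2^j-1$ and the observation that $w_{j+1}$ is centered on every occurrence of $j$ as the two set-up facts before starting the case analysis.
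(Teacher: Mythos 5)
Your proof is correct and takes essentially the same route as the paper's: both inclusions come from the observation that each occurrence of $j$ is the centre of a palindrome $w_{j+1}=w_j\,j\,w_j$ of length $2^{j+1}-1$, which by Lemmas~\ref{lem4b} and~\ref{lem4c} is adjacent to a $j+1$ at distance $2^j$, and the intersection claim follows from the minimal step $2^{j+1}$ of $X^{j+1}$ occurring exactly at the $(k-1)$-columns. The only difference is cosmetic: you invoke Lemma~\ref{lem4b} for the forward inclusion where the paper cites Lemma~\ref{lem6}, and your bookkeeping of $|w_j|=2^j-1$ is, if anything, spelled out more carefully.
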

\begin{proof}
Suppose that $j+1$ occurs at location $m\in X^{j+1}$.
By Lemma~\ref{lem6} we have that both $m-2^j$ and $m+2^j$ are elements of $X^j$.
Hence $X^{j+1}-2^{j}$ and $X^{j+1}+2^j$ are subsets of $X^j$.
Conversely, suppose that $h\in X^j$, i.e., $j$ occurs at location $h$ in $\omega$.
Each $j$ occurs in a $w_j$, which is either a prefix or a suffix of a $w_{j+1}$.
Within $w_j$, $j$ occurs in the middle location $2^j$.
Since $w_j$ is a palindrome, the distance between $j$ and $j+1$
in $w_{j+1}=w_jj+1w_j$ is the same for the prefix and the suffix.
That distance is equal to $2^j$.
Therefore, $h\in X^{j+1}\pm 2^j$.
A number in $X^j$ can be written both as a sum and a difference if it occurs in a
step of size $2^{j+1}$ in $X^{j+1}$. These steps occur at the $k\!-\!1$-columns.
\end{proof}

\section{The difference table}

The rows in the positions table form partition $\mathbb N$, by definition.
Remarkably, this also holds for the rows of the {\em difference table},
which has rows $\Delta^j=X^{j+1}-X^j$. We will see that these
rows $\Delta^j$ again form a partition of~$\mathbb N$.
\begin{table}[ht]\label{tbl6}
{\small\begin{tabular}{c|ccccccccccccccccc}
\rowcolor[gray]{.8}$\omega^4$
     &0&1 &0 &2 &0 &1 &0 &3  &0  &1  &0  &2  &0  &1  &0  &0&\\
\hline
$\Delta^0$&1&3 &5 &7 &8 &10&12&14 &15 &17 &19 &21 &22 &24 &26 &28\\
$\Delta^1$&2&6 &9&13&16&20&23&27 &29 &33 &36 &40 &43 &47 &50 &54\\
$\Delta^2$&4&11&18&25&31&38&45&52 &56 &63 &70 &77 &83 &90 &97&104
\end{tabular}}
\\[9pt]
	\caption{\small The difference table for the Quadribonacci word.
Steps between columns depend
on the letter in the header. If we add or subtract $2$
in the third row, then we get entries in the second row, etc.
We prove below that
the difference table of the Tribonacci word produces the $P$-positions
in Table~\ref{tbl4}.
}
\end{table}
\newline It follows from Lemma~\ref{lem7} and Equation~\ref{eq3}
that the difference table can be derived from $\omega$. The step
size is
\begin{equation}\label{eq4}
\left[
\begin{array}{c}
\ell^1(i)-\ell^0(i)\\
\ell^2(i)-\ell^1(i)\\
\vdots\\
\ell^{k-1}(i)-\ell^{k-2}(i)	
\end{array}
\right]
\end{equation}
if $i$ is the letter heading the column.

\begin{lemma}\label{lem9}
For each $1\leq h\leq k-1$ and each letter $i$
\[\ell^{h}(i)=2\ell^{h-1}(i)-\delta^{k-1}_{h+i},\]
where $\delta^i_j$ is Kronecker's delta.	
\end{lemma}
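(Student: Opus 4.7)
The plan is to recognise that the $-\delta^{k-1}_{h+i}$ correction counts occurrences of the absorbing letter $k-1$ in $\theta^h(i)$, and then to compute that count by a short letter-frequency recursion. First I would observe that $\theta$ sends every letter to a word of length $2$ except $k-1$, which maps to the single letter $0$; equivalently $|\theta(j)|=2-\delta^{k-1}_j$. Summing this identity over the letters of $\theta^h(i)$ gives
\[
\ell^h(i) \;=\; |\theta^{h+1}(i)| \;=\; 2|\theta^h(i)| - N_h(i) \;=\; 2\ell^{h-1}(i) - N_h(i),
\]
where $N_h(i)$ is the number of occurrences of $k-1$ in $\theta^h(i)$. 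The lemma is therefore equivalent to the claim $N_h(i) = \delta^{k-1}_{h+i}$ for $1\le h\le k-1$.

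For this claim I would write $n^h_\ell(i)$ for the number of $\ell$'s in $\theta^h(i)$, so that $N_h(i)=n^h_{k-1}(i)$. From the form of $\theta$, a letter $\ell\ge 1$ can appear in $\theta(j)$ only when $j=\ell-1$ (using $\theta(j)=0(j+1)$ for $j<k-1$; note that $\theta(k-1)=0$ contributes no letter $\ge 1$). This yields the clean recursion $n^h_\ell(i) = n^{h-1}_{\ell-1}(i)$ whenever $\ell\ge 1$ and $h\ge 1$. Iterating $h$ times starting from $(\ell,h)=(k-1,h)$ stays within the legal range $\ell\ge 1$ exactly when $h\le k-1$, and terminates at
\[
n^h_{k-1}(i) \;=\; n^0_{k-1-h}(i) \;=\; \delta^{k-1-h}_i \;=\; \delta^{k-1}_{h+i},
\]
which is the desired identity.

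The one subtlety I would keep an eye on is the range of iteration: the recursion $n^h_\ell=n^{h-1}_{\ell-1}$ breaks down as soon as $\ell$ reaches $0$, which is precisely why the hypothesis $h\le k-1$ is needed. An alternative route closer in spirit to Lemma~\ref{lem6} would decompose $\theta^h(i)$ as a concatenation of prefixes $\theta^j(0)$ (each free of $k-1$ because $j\le h-1\le k-2$) followed by a trailing letter $i+h$ that carries the lone possible $k-1$; either approach produces the same count.
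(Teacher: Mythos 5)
Your proof is correct and follows essentially the same route as the paper's: both reduce the identity to showing that $\theta^h(i)$ contains a letter $k-1$ exactly when $h+i=k-1$, using the fact that a nonzero letter in $\theta(w)$ can only arise from its predecessor. Your formulation via the recursion $n^h_\ell(i)=n^{h-1}_{\ell-1}(i)$ is a slightly more formal version of the paper's forward-tracking argument, and your remark on why $h\leq k-1$ is needed matches the paper's reasoning.
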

\begin{proof}
We defined $\ell^h(i)$ as the length of $\theta^{h+1}(i)$.
The substitution replaces each letter by two letters, unless it is the final letter $k-1$,
which is replaced by a single $0$.
The lemma thus
says that $\theta^h(i)$ contains no letter $k-1$ unless $k-1=h+i$, in which case it
contains one letter $k-1$.

If we start from $i$, the only letter that reaches $k-1$
is produced from $i\to i+1\to\cdots$. All other letters are produced from a $0$
and do not reach $k-1$ because $h\leq k-1$.
The sequence $i\mapsto i+1$ reaches $k_1$ after $k-1-i$ substitutions.
Hence, we have one non-doubling letter if $h=k-1-i$.
\end{proof}

We already observed that $\ell^h(i)>\ell^h(k-1)$ if $i<k-1$. Therefore
$\ell^h(i)-\ell^{h-1}(i)\geq
\ell^{h-1}(i)-\delta^{k-1}_{h+i}\geq
\ell^{h-1}(k-1)=2^{h-1}$. It follows that
the minimal step in $\Delta^{h-1}$ is equal to $2^{h-1}$.
Again, the minimal step occurs at the columns
headed by $k-1$. However,
the minimal steps do not exclusively occur in these columns.
For instance, in Table~\ref{tbl6} the minimal step in the
first row is $1$. It occurs in columns headed by $2$ and $3$.

\begin{lemma}\label{lem10}
$\Delta^{j+1}-2^{j}$ and $\Delta^{j+1}+2^j$ are subsets of $\Delta^j$
and their union is equal to $\Delta^j$.
An element of $\Delta^j$ can be written as $m+2^j$ and $n-2^j$ exactly
if $m$ is taken from a $k\!-\!1$-column in the positions table, and
$n$ is its successor.
\end{lemma}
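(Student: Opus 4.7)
My plan is to mirror the proof of Lemma~\ref{lem8}, lifting the positions statement to a differences statement by means of a single transparent identity. Because $\theta(j)=0\,(j+1)$ for every $j<k-1$, each occurrence of the letter $j+1$ in $\omega$ is the terminal letter of the block $\theta(\omega_m)=0\,(j+1)$ produced by a unique preceding letter $\omega_m=j$; the $n$th occurrence of $j+1$ therefore sits at position $\pi(X^j_n)$, where $\pi(m)=\sum_{i=1}^{m}|\theta(\omega_i)|=2m-N(m)$ and $N(m)$ counts the $(k-1)$'s among $\omega_1,\ldots,\omega_m$ (the only substitution block of length one is $\theta(k-1)=0$). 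Subtracting $X^j_n$ yields the key identity
\[
\Delta^h_n \;=\; X^h_n - N(X^h_n),
\]
valid for every row $h\leq k-2$ of the difference table.

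With this identity, the inclusions $\Delta^{j+1}\pm 2^j\subset\Delta^j$ become a direct transcription of Lemma~\ref{lem8}. Fix $n$ and apply Lemma~\ref{lem8} at level $j+1$ to obtain indices $p<q$ with $X^j_p=X^{j+1}_n-2^j$ and $X^j_q=X^{j+1}_n+2^j$. By Lemma~\ref{lem4b} the factor of $\omega$ at positions $X^j_p,\ldots,X^j_q$ reads $j\,w_j\,(j+1)\,w_j\,j$; the recursion $w_{j+1}=w_j\,j\,w_j$ shows inductively that $w_j$ uses only the letters $0,1,\ldots,j-1$, and the constraint $j+1\leq k-2$ (needed for $\Delta^{j+1}$ to be a row of the table) guarantees that the central $j+1$ is also not $k-1$. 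Hence $N$ is constant on $[X^j_p,X^j_q]$, and substituting into the identity gives $\Delta^{j+1}_n\pm 2^j=\Delta^j_p$ or $\Delta^j_q$. The reverse inclusion, and hence the union claim, follows by the same argument applied to the converse direction of Lemma~\ref{lem8}.

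For the characterisation of elements writable both ways, $\Delta^j_r$ is simultaneously of the form $\Delta^{j+1}_a+2^j$ and $\Delta^{j+1}_b-2^j$ exactly when the underlying occurrences $X^{j+1}_a,X^{j+1}_b$ of $j+1$ are consecutive and at distance $2^{j+1}$. The step between $X^{j+1}_a$ and $X^{j+1}_{a+1}$ equals $\ell^{j+1}(\omega_a)$, and Lemma~\ref{lem9} together with the extremal analysis preceding Lemma~\ref{lem10} shows that this minimum $2^{j+1}$ is attained exclusively when $\omega_a=k-1$; thus $b=a+1$ and $\Delta^{j+1}_a$ sits in a $k-1$-column, as claimed. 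The main obstacle I anticipate is isolating the identity above; once it is in place, the rest is bookkeeping, the only delicate point being the verification that no $k-1$ occurs in the short window $[X^{j+1}_n-2^j,\,X^{j+1}_n+2^j]$.
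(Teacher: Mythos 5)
Your proof is correct, but it takes a genuinely different route from the paper's. The paper's argument is a three-line set computation: it applies Lemma~\ref{lem8} at two consecutive levels, writing $\Delta^j=X^{j+1}-X^j=\left(X^{j+2}\mp 2^{j+1}\right)-\left(X^{j+1}\mp 2^{j}\right)$ and asserting that the two sign choices are correlated (both governed by the heading letter, i.e.\ by whether the relevant occurrence sits in the prefix or the suffix copy of the palindrome), so that the expression telescopes to $\Delta^{j+1}\mp 2^j$; the index-matching that justifies this correlation is left implicit. You instead isolate the identity $X^{j+1}_n=2X^j_n-N(X^j_n)$, hence $\Delta^j_n=X^j_n-N(X^j_n)$ with $N$ counting occurrences of $k-1$, and transport Lemma~\ref{lem8} through it; the only point to verify is that $N$ is constant across the window $j\,w_j\,(j+1)\,w_j\,j$, which holds precisely because the hypothesis $j+1\le k-2$ (needed anyway for $\Delta^{j+1}$ to be a row of the table) keeps the letter $k-1$ out of that window. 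Your version buys an explicit, checkable handle on the correspondence of indices and signs that the paper's proof glosses over, at the cost of the extra bookkeeping function $N$ and an appeal to Lemma~\ref{lem4b} and the palindrome recursion rather than to Lemma~\ref{lem8} alone. Your treatment of the ``both ways'' characterisation --- reduce to the minimal step $2^{j+1}$ in $X^{j+1}$, attained exactly in $(k-1)$-columns --- agrees in substance with the paper's.
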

\begin{proof}
By Lemma~\ref{lem8} we have
\[
\Delta^j=X^{j+1}-X^j=
\left(X^{j+2}-2^{j+1}\cup X^{j+2}+2^{j+1}\right)
-
\left(X^{j+1}-2^{j}\cup X^{j+1}+2^{j}\right).
\]
The signs at $2^{j+1}$ and $2^j$ depend on the heading letter and therefore
\[
\Delta^j=\left(X^{j+2}-X^{j+1}-2^{j+1}+2^j\right)\cup \left(X^{j+2}-X^{j+1}+2^{j+1}-2^j\right).
\]
which is $\Delta^{j+1}-2^j \cup \Delta^{j+1}+2^j$.
\end{proof}

\begin{lemma}\label{lem11}
The rows in the difference table are disjoint as sets:
\[\Delta^i\cap \Delta^j=\emptyset \ \text{if }i\not=j.\]
\end{lemma}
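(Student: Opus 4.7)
My plan is to iterate Lemma~\ref{lem10} so that any element of $\Delta^i$ is expressed as a signed shift of some element of $\Delta^j$, and then to derive a contradiction from the lower bound $2^j$ on the minimum step of $\Delta^j$ (established in the paragraph preceding Lemma~\ref{lem10}).

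Without loss of generality I take $i<j$. Applying Lemma~\ref{lem10} a total of $j-i$ times yields
\[
\Delta^i \;=\; \bigcup_{\epsilon\in\{\pm 1\}^{j-i}}\Bigl(\Delta^j + \sum_{h=0}^{j-i-1}\epsilon_h\,2^{i+h}\Bigr),
\]
so every $n\in\Delta^i$ can be written as $n=m+s$ for some $m\in\Delta^j$ and some signed sum $s=\sum_{h=0}^{j-i-1}\epsilon_h\,2^{i+h}$. Two quick estimates on $s$ will do the job: its magnitude satisfies $|s|\le 2^i+2^{i+1}+\cdots+2^{j-1}=2^j-2^i<2^j$, and $s\ne 0$ because a vanishing $\pm$-combination of distinct powers of $2$ would contradict uniqueness of binary representations.

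Now suppose for contradiction that some $n$ lies in $\Delta^i\cap\Delta^j$. Writing $n=m+s$ as above, both $n$ and $m$ are elements of $\Delta^j$, and they are distinct because $s\ne 0$. But then $0<|n-m|=|s|<2^j$, contradicting the fact that any two distinct elements of the strictly increasing sequence $\Delta^j$ differ by at least its minimum step $2^j$. Hence $\Delta^i\cap\Delta^j=\emptyset$.

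The main obstacle is purely organisational: correctly unfolding Lemma~\ref{lem10} into a union indexed by sign vectors and then combining that expansion with the minimum-step estimate. Once both ingredients are in hand, the contradiction is immediate, so I do not anticipate any deeper technical difficulty.
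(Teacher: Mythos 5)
Your proof is correct and follows essentially the same route as the paper: iterate Lemma~\ref{lem10} to write each element of $\Delta^i$ as an element of $\Delta^j$ shifted by a signed sum of the powers $2^i,\ldots,2^{j-1}$, then note that this nonzero shift has magnitude strictly less than the minimal step $2^j$ of $\Delta^j$. The only difference is that you spell out the nonvanishing of the shift and the contradiction explicitly, where the paper leaves these implicit.
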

\begin{proof}
Suppose $j>i$. By iterating Lemma~\ref{lem10} we find that each
element of $\Delta^i$ is in some
$\Delta^j\pm 2^{j-1}\pm \cdots\pm 2^{i}$. The minimal step
size in $\Delta^j$ is $2^j$ and $2^{j-1}+\cdots+1<2^j$.
Therefore, $\Delta^i$ and $\Delta^j$ are disjoint.
\end{proof}

\begin{theorem}
The rows in the difference table form a partition of $\mathbb N$.
\end{theorem}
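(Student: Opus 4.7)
Disjointness of the rows is provided by Lemma~\ref{lem11}, so the task reduces to showing $\bigcup_{j=0}^{k-2}\Delta^j=\N$. The plan is to reduce everything to $\Delta^{k-2}$ and then close with a gap argument.

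First, I would iterate Lemma~\ref{lem10} down to $\Delta^{k-2}$: applied $(k-2)-j$ times, it yields
\[
\Delta^j \;=\; \bigcup_{\epsilon\in\{\pm1\}^{k-2-j}}\Bigl(\Delta^{k-2}+\sum_{i=j}^{k-3}\epsilon_i 2^i\Bigr).
\]
Taking the union over $j=0,\ldots,k-2$ collects all shifts of this form, and I would verify that these shifts exhaust $\{s\in\Z:|s|\leq 2^{k-2}-1\}$. The shift $0$ comes from $j=k-2$ (empty sum), and any nonzero $s$ with $|s|\leq 2^{k-2}-1$, written $s=2^jm$ with $m$ odd, arises from that $j$ via the signed-binary expansion $m=\sum_{i=0}^{k-3-j}\epsilon_i 2^i$, which exists because $|m|\leq 2^{k-2-j}-1$. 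Hence $\bigcup_{j=0}^{k-2}\Delta^j=\Delta^{k-2}+\bigl([-(2^{k-2}-1),2^{k-2}-1]\cap\Z\bigr)$.

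Second, I would bound the gaps in $\Delta^{k-2}$ via (\ref{eq4}) and Lemma~\ref{lem9}: the step of $\Delta^{k-2}$ at a column headed by $i$ equals $\ell^{k-2}(i)-\delta^{k-1}_{k-1+i}$. Iterating Lemma~\ref{lem9} from the base values $\ell^0(i)=2$ for $i<k-1$ and $\ell^0(k-1)=1$ yields the closed form $\ell^{k-2}(0)=2^{k-1}$ and $\ell^{k-2}(i)\leq 2^{k-1}-1$ for $i>0$, so every step of $\Delta^{k-2}$ is at most $2^{k-1}-1$, while $\Delta^{k-2}_1=2^{k-2}$.

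To conclude, fix $n\in\N$ and consider the integer interval $I=[n-(2^{k-2}-1),\,n+(2^{k-2}-1)]$ of length $2^{k-1}-1$. If $n\leq 2^{k-1}-1$ then $\Delta^{k-2}_1=2^{k-2}\in I$; otherwise let $m$ be the largest index with $\Delta^{k-2}_m\leq n+(2^{k-2}-1)$, and observe that if $\Delta^{k-2}_m<n-(2^{k-2}-1)$ then the next step would be at least $2^{k-1}$, contradicting the bound. So $I$ meets $\Delta^{k-2}$, giving $n=\Delta^{k-2}_m+s$ with $|s|\leq 2^{k-2}-1$ and hence $n\in\bigcup_j\Delta^j$. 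The main obstacle is the tight coincidence between the span $2^{k-1}-1$ of the shift set and the maximum step $2^{k-1}-1$ of $\Delta^{k-2}$: both are forced by Lemmas~\ref{lem10} and~\ref{lem9}, but extracting them requires careful index bookkeeping (the signed-binary rearrangement for the shifts and the iterated recursion for $\ell^{k-2}$). Once both bounds are in hand, the final gap argument is immediate.
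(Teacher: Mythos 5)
Your proposal is correct and follows essentially the same route as the paper: iterate Lemma~\ref{lem10} to express each $\Delta^j$ as shifts of $\Delta^{k-2}$ by signed sums $\sum \pm 2^i$, check that these shifts exhaust $\{-(2^{k-2}-1),\ldots,2^{k-2}-1\}$, and then bound the maximal step of $\Delta^{k-2}$ by $2^{k-1}-1$ using Lemma~\ref{lem9} and Equation~\ref{eq4}. You merely fill in details the paper leaves implicit (the signed-binary expansion, the explicit computation of $\ell^{k-2}(i)$, and the final interval argument), and you correctly use $2^{k-3}$ as the top power in the iterated shifts where the paper's displayed formula has a small typo.
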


\begin{proof}
We only need to prove that the rows cover $\mathbb N$.
By iterating Lemma~\ref{lem10} each $\Delta^i$ is the union
of all $\Delta^{k-2}\pm 2^{k-2}\pm \cdots\pm 2^{i}$,
with $\Delta^{k-2}$ the bottom row of our table.
The union of the rows is equal to the union
of $\Delta^{k-2}+n$ for all integers $n$ that can be written as
$\pm 2^{k-2}\pm \cdots\pm 2^{i}$ for some $i$
and some choice of the signs. Here we take $n=0$ if $i=k-2$.
It is not hard to verify that each $n\in\{-2^{k-2}+1,\ldots, 2^{k-2}-1\}$
admits such an expansion. Hence, it suffices to prove that
the maximal step in $\Delta^{k-2}$ is $2^{k-1}-1$.

By Lemma~\ref{lem9} and Equation~\ref{eq4} the steps in $\Delta^{k-2}$ are given
by \[\ell^{k-1}(i)-\ell^{k-2}(i)=\ell^{k-2}(i)-\delta^i_0.\]
This is maximal if $i=0$ or $i=1$, when it is indeed equal to $2^{k-1}-1$.
\end{proof}

The difference table of the Tribonacci word $\omega^3$ consists of two rows.
We define the word $\nu=\nu_1\nu_2\nu_3\cdots$ by taking $\nu_j=0$
if $j$ is in the first row and $\nu_j=1$ if it is in the second row.
We will prove below that the difference table of $\omega^3$ gives the
$P$-positions of Splythoff Nim and so $\nu$ is the word given by
Equation~\ref{ABword}. We are now able to prove that it can be
coded by~$\omega^3$ as already described by Equation~\ref{codedword}.

\begin{theorem}\label{codingnu}
If we delete every $2$ from $\omega^3$ then we get $\nu$ corresponding
to the $P$-positions of Splythoff Nim.
\end{theorem}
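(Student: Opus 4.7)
The plan is to identify $\Delta^0$ and $\Delta^1$ explicitly in terms of the letter positions in $\omega^3$, and then match these sets with the ranks of $0$'s and $1$'s in the $2$-deleted Tribonacci word. Throughout, write $n_j(m)$ for the number of occurrences of $j$ in $\omega^3_1\cdots\omega^3_m$, and set $L_m=\sum_{i=1}^m|\theta(\omega^3_i)|$. Since $|\theta(0)|=|\theta(1)|=2$ and $|\theta(2)|=1$, one has $L_m=2m-n_2(m)$.

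I would next exploit the fixed-point identity $\omega^3=\theta(\omega^3)$. The image of position $s$ under $\theta$ occupies positions $L_{s-1}+1,\ldots,L_s$ of $\omega^3$ (with $L_0=0$). Each $\theta(j)$ begins with the letter $0$, so position $L_{s-1}+1$ always carries a $0$; these positions enumerate $X^0$ in order, giving $x_s=L_{s-1}+1$ for every $s$. Because $1$ occurs only as the second letter of $\theta(0)$, the $m$-th $1$ sits at $L_{x_m}$; that is, $y_m=L_{x_m}$. Because $2$ occurs only as the second letter of $\theta(1)$, likewise $z_m=L_{y_m}$. A direct calculation then yields $y_m-x_m=L_{x_m}-x_m=x_m-n_2(x_m)$, and analogously $z_m-y_m=y_m-n_2(y_m)$. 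The quantity $p-n_2(p)$ is exactly the number of non-$2$ letters among $\omega^3_1,\ldots,\omega^3_p$, i.e.\ the rank of $p$ in the increasing enumeration of $X^0\cup X^1$. Therefore $\Delta^0$ is the set of ranks of the elements of $X^0$ among the non-$2$ positions, and $\Delta^1$ is the set of ranks of the elements of $X^1$. Since the non-$2$ positions of $\omega^3$ form the disjoint union $X^0\sqcup X^1$, this is equivalent to the assertion that $j\in\Delta^0$ precisely when the $j$-th non-$2$ letter of $\omega^3$ is a $0$, and $j\in\Delta^1$ precisely when it is a $1$. That is exactly the statement of the theorem.

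The main obstacle I foresee is securing the three identities $x_s=L_{s-1}+1$, $y_m=L_{x_m}$, $z_m=L_{y_m}$; everything afterwards is one line of arithmetic. These identities rest on the asymmetric role of the three letters under $\theta$: every $\theta(j)$ begins with $0$, while $1$ and $2$ are confined to the closing positions of $\theta(0)$ and $\theta(1)$ respectively. Both facts are readable directly from the substitution rule, and no separate appeal to the partition result of the preceding section is needed, since the rank description of $\Delta^0$ and $\Delta^1$ given here re-derives the partition for the Tribonacci case.
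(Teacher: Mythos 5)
Your proof is correct, and it takes a genuinely different route from the paper's. The paper computes the step sizes of the first difference row from Equation~\ref{eq4}: the step in $\Delta^0$ is $2$ under a $0$-column and $1$ under a $1$- or $2$-column, so that (using the partition of $\mathbb N$ by the difference rows) $\nu=\kappa(\omega^3)$ for the coding $\kappa\colon 0\mapsto 01,\ 1\mapsto 0,\ 2\mapsto 0$; it then observes that $\kappa=\lambda\circ\theta$ with $\lambda$ the $2$-deleting coding, whence $\kappa(\omega^3)=\lambda(\theta(\omega^3))=\lambda(\omega^3)$ by the fixed-point property. You instead work directly with the block decomposition $\omega^3=\theta(\omega^3_1)\theta(\omega^3_2)\cdots$ and the length function $L_m=2m-n_2(m)$ to obtain the exact identities $x_s=L_{s-1}+1$, $y_m=L_{x_m}$, $z_m=L_{y_m}$; all three are sound, since every block begins with $0$ while $1$, respectively $2$, occurs only as the second letter of $\theta(0)$, respectively $\theta(1)$. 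The resulting formulas $y_m-x_m=x_m-n_2(x_m)$ and $z_m-y_m=y_m-n_2(y_m)$ identify $\Delta^0$ and $\Delta^1$ as the rank sets of $X^0$ and $X^1$ inside $X^0\cup X^1$, which is precisely the assertion that the $2$-deleted word equals $\nu$. What your route buys: it is self-contained, re-proving the partition property for $k=3$ as a by-product (ranks exhaust $\mathbb N$ and each non-$2$ position lies in exactly one of $X^0$, $X^1$), and it visibly generalizes: for any $k$, deleting the letter $k-1$ from $\omega^k$ codes the partition $\{\Delta^0,\dots,\Delta^{k-2}\}$. What the paper's route buys is brevity and the clean coding identity $\lambda\circ\theta=\kappa$. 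One bookkeeping remark: like the paper's own proof, your argument establishes $\lambda(\omega^3)=\nu$ with $\nu$ defined from the difference table; the identification of $\nu$ with the $P$-positions of Splythoff Nim still rests on Theorem~\ref{thm1}, proved later, and you should state that dependence explicitly.
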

\begin{proof}
According to our main result, proved below, shows that the difference
table of $\omega^3$ corresponds to the $P$-positions.
The step size in the first row of this difference table
is two in columns headed by $0$ and it is one in the columns headed
by $1$ or $2$. This implies that the first row corresponds to the locations
of $0$ if we apply the coding $0\mapsto 01,\ 1\mapsto 0,\ 2\mapsto 0$.
We denote this coding by $\kappa$. The Tribonacci word is the limit of
$\theta^n(0$. We conclude that $\nu$ is the limit of $\kappa(\theta^n(0))$.

Let $\lambda$ be the coding $0\mapsto 0,\ 1\mapsto 1,\ 2\mapsto\epsilon$,
where as before $\epsilon$ denotes the empty word. The coding $\lambda\circ\theta$
is equal to $\kappa$. Therefore $\lambda(\theta^{n+1}(0))=\kappa(\theta^n(0))$
and taking limits gives $\lambda(\omega^3)=\nu$.
\end{proof}

Our proof of Theorem~\ref{thm1}depends on an analysis of the double difference table,
which applies to all $k$-bonacci words.

\section{The double-difference table}

We continue by differencing the differences
\[
d\Delta^{i}=\Delta^{i+1}-\Delta^i
\]
and collect these as rows in a {\em double-difference table}.
It turns out that we need to add a bottom row containing the sum
\[
\Sigma=\Delta^0+\cdots+\Delta^{k-2}
\]
to turn the rows of the table into a partition.
We call this the {\em sum row} and we call the other rows the {\em difference rows}.
Again, we add the $k$-bonacci word $\omega$ in the header.
For the Tribonacci word we have one difference
row and one sum row. These are the $\Delta$
and $\Sigma$ of Table~\ref{tbl4} and that is how the double-difference table
will appear in our proof of Theorem~\ref{thm1}.

\begin{table}[ht]
{\small\begin{tabular}{c|ccccccccccccccccc}
\rowcolor[gray]{0.8}$\omega^4$&0&1 &0 &2 &0 &1 &0 &3  &0  &1  &0  &2  &0  &1  &0  &0\\
\hline
$d\Delta^{0}$&1&3 &4&6 &8 &10&11&13 &14 &16 &17 &19 &21 &23 &24 &26\\
$d\Delta^{1}$&2&5 &9&12&15&18&22&25 &27 &30 &34 &37 &40 &43 &47 &50\\
\rowcolor[gray]{0.95}$\Sigma$       &7&20&32&45&55&68&80&93 &100 &113 &125 &138 &148 &161 &173&193
\end{tabular}}
\\[9pt]
	\caption{\small The double-difference table for our standing example, the Quadribonacci word. If we add or subtract 2
from entries in the third row, then we get entries in the second row. However, we dot not get every
entry.}
\end{table}
Lemma~\ref{lem9} implies that the step between columns is
\begin{equation}\label{eq5}
\left[
\begin{array}{c}
\ell^2(i)-2\ell^1(i)+\ell^0(i)\\
\vdots\\
\ell^{k-1}(i)-2\ell^{k-2}(i)+\ell^{k-3}(i)\\
\ell^{k-1}(i)-\ell^0(i)
\end{array}
\right]
=
\left[
\begin{array}{rcl}
\ell^0(i)&-&\delta^{k-1}_{2+i}\\
&\vdots&\\
\ell^{k-3}(i)&-&\delta^{k-1}_{k-1+i}\\
\ell^{k-1}(i)&-&\ell^0(i)
\end{array}
\right].
\end{equation}
The minimum step occurs at $(k\!-\!1)$-columns, where
$\ell^h(i)-\delta^{k-1}_{h+2+i}=\ell^h(k-1)=2^h$
and $\ell^{k-1}(k-1)-\ell^0(k-1)=2^{k-1}-1$.
The maximum step in the final difference row is $\ell^{k-3}(i)-\delta^{k-1}_{k-1+i}$.
If $i\in\{0,1\}$ then the length of $\theta^h(i)$ is doubled at each
step up to $k-2$. If $i>1$ then at some step it is doubled minus one (see Lemma~\ref{lem9}).
Therefore $\ell^{k-3}(i)$ has maximal value
$2^{k-3}$ at $i=0,1$. Now $\delta^{k-1}_{k-1+i}=1$ if $i=0$ and it
is $1$ at $i=1$. Therefore, the step $\ell^{k-3}(i)-\delta^{k-1}_{k-1+i}$
is maximal in the columns marked by $1$.
Let $M\subset d\Delta^{k-3}$ be the
subsequence that is taken from these $1$-columns.

The steps in $M$ are sums of steps in $\Delta^{k-3}$ along
the intermediate letters between two $1$'s. In other words,
we the $j$-th step in $M$ is a sum over the $j$-th {\em return word}~\cite{Dur98}:
the $j$-th
word in $\omega$ that starts with a $1$ and leads up to, but
does not include, the $j+1$-th $1$.
The $1$'s occur as the second letters in the sequence
$\omega=\theta^2(\omega_1)\theta^2(\omega_2)\theta^2(\omega_3)\cdots$.
If $\omega_j=i < k-2$ then the $j$-th return word is $10(i+2)0$.
If $\omega_j=k-2$ then the return word is $100$
and if $\omega_j=k-1$ then it is $10$.

\begin{lemma}\label{lem12}
The $j$-th step in $M$ has length $\ell^{k-1}(\omega_j)-\ell^{0}(\omega_j)$.
\end{lemma}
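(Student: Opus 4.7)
The plan is to express $M_{j+1}-M_j$ as a sum of one-column step sizes in the row $d\Delta^{k-3}$, and then to recognise that sum as a length difference. By Equation~\ref{eq5} (equivalently by Lemma~\ref{lem9}), the step from column $n$ to column $n+1$ in the row $d\Delta^{k-3}$ equals $\ell^{k-3}(\omega_n)-\delta^{k-1}_{k-1+\omega_n}$, and the second term vanishes unless $\omega_n=0$. Since $M$ is by definition the subsequence of $d\Delta^{k-3}$ obtained by selecting the $1$-columns, the $j$-th step $M_{j+1}-M_j$ equals the sum of these step sizes taken over the letters of the $j$-th return word $R_j=\omega_{p_j}\omega_{p_j+1}\cdots\omega_{p_{j+1}-1}$.

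The next step is to replace $R_j$ by $\theta^2(\omega_j)$ as a multiset of letters. Using $\omega=\theta^2(\omega_1)\theta^2(\omega_2)\cdots$, one checks that $R_j$ is obtained from $\theta^2(\omega_j)$ by discarding its initial letter, which is the $0$ preceding the second-letter $1$, and appending the initial letter of $\theta^2(\omega_{j+1})$, which is again a $0$. Hence, as multisets, $R_j$ and $\theta^2(\omega_j)$ agree, so that
\[
M_{j+1}-M_j=\sum_{c\in\theta^2(\omega_j)}\ell^{k-3}(c)-\#\{c\in\theta^2(\omega_j):c=0\}.
\]

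To finish, I would evaluate the two pieces separately. The first sum equals $\ell^{k-1}(\omega_j)$ because $\theta^k(\omega_j)=\theta^{k-2}(\theta^2(\omega_j))$ is a concatenation of the blocks $\theta^{k-2}(c)$ with $c$ running over the letters of $\theta^2(\omega_j)$, and each such block has length $\ell^{k-3}(c)$. For the second piece, every letter of $\theta(\omega_j)$ is mapped by $\theta$ to a word beginning with $0$, since $\theta(i)=0(i+1)$ for $i<k-1$ and $\theta(k-1)=0$; hence the number of $0$'s in $\theta^2(\omega_j)$ equals the length $\ell^0(\omega_j)$ of $\theta(\omega_j)$. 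Combining the two identifications gives $M_{j+1}-M_j=\ell^{k-1}(\omega_j)-\ell^0(\omega_j)$, as claimed.

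The main obstacle is the slightly delicate bookkeeping between the return word and the $\theta^2$-block sitting above it; once one observes that trimming the leading $0$ of $\theta^2(\omega_j)$ and tacking on the leading $0$ of $\theta^2(\omega_{j+1})$ leaves the multiset of letters unchanged, everything else reduces to the substitution identity $\theta^k=\theta^{k-2}\circ\theta^2$ together with the remark that $\delta^{k-1}_{k-1+i}$ is nonzero exactly when $i=0$.
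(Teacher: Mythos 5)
Your proof is correct and follows essentially the same route as the paper's: identify the $j$-th return word of $1$ with $\theta^2(\omega_j)$ up to moving a leading $0$, sum the column steps $\ell^{k-3}(c)-\delta^{k-1}_{k-1+c}$ over its letters, and use $\ell^{k-3}\circ\theta^2=\ell^{k-1}$. The only difference is cosmetic: where the paper enumerates the three possible return words $10(i+2)0$, $100$, $10$ and checks each case, you replace the case analysis by the uniform observation that $\theta^2(\omega_j)$ contains exactly $\ell^0(\omega_j)$ zeroes.
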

\begin{proof}
If the $j$-th return word is $10(i+2)0$
for $i<k-2$, then the sum of the steps
is
\[
\ell^{k-3}(1)+\ell^{k-3}(0)-1+\ell^{k-3}(i+1)+\ell^{k-3}(0)-1.
\]
If we agree that $\ell(w)$ is the sum of $\ell$ over the letters in $w$,
and if we denote the $j$-th return word by $v_j$, then we can write this as
\[
\ell^{k-3}(v_j)-2=\ell^{k-3}(v_j)-\ell^0(i).
\]
This equation also holds for the other two return words $100$ if $i=k-2$
and $10$ if $i=k-1$.
By definition $\ell^{k-3}(w)$ is the length of $\theta^{k-2}(w)$
and $v_j=\theta^2(i)=\theta^2(\omega_j)$. We find that the sum of
the steps is
$
\ell^{k-3}(\theta^2(\omega_j))-\ell^0(\omega_j).
$
\end{proof}

We now repeat Lemmas~\ref{lem10} and~\ref{lem11} for the difference
rows in the double-difference table.

\begin{lemma}\label{lem13}
$d\Delta^{j+1}-2^{j}$ and $d\Delta^{j+1}+2^j$ are subsets of $d\Delta^j$
and their union is equal to $d\Delta^j$.
An element of $d\Delta^j$ can be written as $m+2^j$ and $n-2^j$ exactly
if $m$ is taken from a $k\!-\!1$-column in the positions table.
\end{lemma}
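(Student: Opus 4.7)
The plan is to mimic the proof of Lemma~\ref{lem10} one level higher, with Lemma~\ref{lem10} itself playing the role that Lemma~\ref{lem8} played there. The structural parallel is that $d\Delta$ relates to $\Delta$ the way $\Delta$ relates to $X$, so the same set-theoretic manipulation should go through.

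First I would apply Lemma~\ref{lem10} at levels $j$ and $j+1$,
\[
\Delta^j = (\Delta^{j+1} - 2^j) \cup (\Delta^{j+1} + 2^j), \qquad
\Delta^{j+1} = (\Delta^{j+2} - 2^{j+1}) \cup (\Delta^{j+2} + 2^{j+1}),
\]
and substitute into $d\Delta^j = \Delta^{j+1} - \Delta^j$. As in the proof of Lemma~\ref{lem10}, the signs at $2^{j+1}$ and $2^j$ are matched through the column heading, yielding
\[
d\Delta^j = (\Delta^{j+2} \mp 2^{j+1}) - (\Delta^{j+1} \mp 2^j) = d\Delta^{j+1} + (\mp 2^{j+1} \pm 2^j) = d\Delta^{j+1} \pm 2^j,
\]
so $d\Delta^j = (d\Delta^{j+1} - 2^j) \cup (d\Delta^{j+1} + 2^j)$. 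For the ``both'' characterisation, an element of $d\Delta^j$ equals $m+2^j$ and $n-2^j$ for entries $m,n\in d\Delta^{j+1}$ precisely when $n=m+2^{j+1}$, i.e.\ when $m$ and $n$ are consecutive entries of $d\Delta^{j+1}$ separated by its minimum step $2^{j+1}$. Using the step formula $\ell^{j+1}(\omega_\ell)-\delta^{k-1}_{j+3+\omega_\ell}$ at column $\ell$ (read off from Lemma~\ref{lem9} and equation~(\ref{eq5})), I would check that this minimum is attained exactly when $\omega_\ell=k-1$, so $m$ is drawn from a $(k{-}1)$-column of the positions table.

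The main obstacle will be justifying the informal sign matching in the substitution: the naive column-wise identity $\Delta^j_n = \Delta^{j+1}_n \pm 2^j$ is \emph{false} in general (already $\Delta^0_2=3$ while $\Delta^1_2\pm 1 \in \{5,7\}$ in the Quadribonacci table). The correct framing, tacit in the proof of Lemma~\ref{lem10}, is that each entry $\Delta^{j+1}_n$ accounts for the consecutive pair $\Delta^{j+1}_n \mp 2^j$ of entries of $\Delta^j$, with a coincidence at $(k{-}1)$-columns; once this pairing is set up at levels $(j,j+1)$ and $(j+1,j+2)$, composing them gives an unambiguous pairing between entries of $d\Delta^j$ and $d\Delta^{j+1}$, and the sign propagation goes through as written.
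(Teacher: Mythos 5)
Your proposal follows essentially the same route as the paper: the paper's proof of Lemma~\ref{lem13} is verbatim the set-theoretic substitution you describe, applying Lemma~\ref{lem10} at levels $j$ and $j{+}1$ and matching the signs through the column heading to obtain $d\Delta^j=(d\Delta^{j+1}-2^j)\cup(d\Delta^{j+1}+2^j)$. You go somewhat further than the paper in two respects that are to your credit rather than a divergence of method: you explicitly treat the second claim (the ``both a sum and a difference'' characterisation via the minimal step $2^{j+1}$ of $d\Delta^{j+1}$ at $(k{-}1)$-columns), which the paper leaves implicit under ``minor editing,'' and you correctly flag that the sign matching must be read as a pairing of consecutive entries rather than a column-wise identity.
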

\begin{proof}
The same as the proof of Lemma~\ref{lem10}, with minor editing:
\[
d\Delta^j=\Delta^{j+1}-\Delta^j=
\left(\Delta^{j+2}-2^{j+1}\cup \Delta^{j+2}+2^{j+1}\right)
-
\left(\Delta^{j+1}-2^{j}\cup \Delta^{j+1}+2^{j}\right).
\]
Again, the signs depend on locations and so this is equal to
\[
\left(\Delta^{j+2}-\Delta^j-2^{j+1}+2^j\right)\cup \left(\Delta^{j+2}-\Delta^{j+1}+2^{j+1}-2^j\right)
\]
which is $d\Delta^{j+1}-2^j \cup d\Delta^{j+1}+2^j$.
\end{proof}

\begin{lemma}\label{lem14}
The difference rows in the double-difference table are disjoint as sets
\[d\Delta^i\cap d\Delta^j=\emptyset \ \text{if }i\not=j.\]
\end{lemma}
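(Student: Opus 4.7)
The plan is to mimic the proof of Lemma~\ref{lem11} almost verbatim, substituting Lemma~\ref{lem13} for Lemma~\ref{lem10}. Fix indices $0 \leq i < j \leq k-3$ and pick an arbitrary $x \in d\Delta^i$. Iterating Lemma~\ref{lem13} upward from level $i$ to level $j$ produces, after $j-i$ applications, an element
\[
y = x + \sum_{h=i}^{j-1} \epsilon_h\, 2^h \in d\Delta^j
\]
for suitable signs $\epsilon_h \in \{-1,+1\}$.

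Next I would bound the correction $n := y - x = \sum_{h=i}^{j-1} \epsilon_h 2^h$ on both sides. The upper bound $|n| \leq \sum_{h=i}^{j-1} 2^h = 2^j - 2^i < 2^j$ is immediate. For the lower bound, the leading term $\epsilon_{j-1} 2^{j-1}$ dominates the tail, whose total magnitude is at most $2^{j-1} - 2^i$, so $|n| \geq 2^i > 0$. Hence $0 < |n| < 2^j$.

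To finish, I would invoke the step analysis carried out immediately before Lemma~\ref{lem13} --- based on Equation~\ref{eq5} together with the identity $\ell^h(k-1) = 2^h$ --- which shows that the minimum step in $d\Delta^j$ is exactly $2^j$. If $x$ also lay in $d\Delta^j$, then $x$ and $y = x + n$ would be two distinct elements of $d\Delta^j$ separated by strictly less than $2^j$, contradicting this minimality. Therefore $d\Delta^i \cap d\Delta^j = \emptyset$.

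The overall structure is so close to the proof of Lemma~\ref{lem11} that I do not expect a substantive obstacle; the one point deserving any care is the non-vanishing of the signed sum of distinct powers of two, which follows at once from the fact that the top term cannot be cancelled by all lower ones combined.
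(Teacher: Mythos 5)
Your proof is correct and follows essentially the same route as the paper's, which likewise transplants the argument of Lemma~\ref{lem11} with Lemma~\ref{lem13} in place of Lemma~\ref{lem10} and compares the signed sum $\pm 2^{j-1}\pm\cdots\pm 2^i$ against the minimal step $2^j$ of $d\Delta^j$. Your added care in verifying that the signed sum is nonzero is a worthwhile detail that the paper leaves implicit.
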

\begin{proof}
Suppose $j>i$. By iterating Lemma~\ref{lem10} we find that each
element of $\Delta^i$ is in some
$d\Delta^j\pm 2^{j-1}\pm \cdots\pm 2^{i}$. The minimal step
size in $d\Delta^j$ is $2^j$ and $2^{j-1}+\cdots+1<2^j$.
Therefore, $d\Delta^i$ and $d\Delta^j$ are disjoint.
\end{proof}

\begin{lemma}\label{lem15}
Let $S\subset\mathbb N$ be the set $S=M+2^{k-3}$.
The difference rows in the double-difference table form a partition of $\mathbb N\setminus S$.
\end{lemma}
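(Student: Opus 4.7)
The plan is to echo the argument that the single-difference rows partition $\mathbb{N}$, while tracking carefully which integers the shifted translates miss. By iterating Lemma~\ref{lem13} one writes each $d\Delta^i$ as a union of translates $d\Delta^{k-3}+n$ with $n = \pm 2^{i}\pm 2^{i+1}\pm\cdots\pm 2^{k-4}$ (taking $n=0$ when $i=k-3$). A short binary-representation argument shows that these shifts enumerate each integer in $[-(2^{k-3}-1),2^{k-3}-1]$ exactly once, the $2$-adic valuation of $n$ picking out the index $i$. Combining this with the disjointness from Lemma~\ref{lem14}, one identifies $\bigcup_{i=0}^{k-3}d\Delta^i$ with the Minkowski sum $d\Delta^{k-3}+\{-(2^{k-3}-1),\ldots,2^{k-3}-1\}$.

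Second, I would determine the column step sizes of $d\Delta^{k-3}$ from Equation~\ref{eq5}: the step at a column headed by letter $i$ equals $\ell^{k-3}(i)-\delta^{k-1}_{k-1+i}$. A short induction on Lemma~\ref{lem9} gives $\ell^{k-3}(0)=\ell^{k-3}(1)=2^{k-2}$, $\ell^{k-3}(i)=2^{k-2}-2^{i-2}$ for $2\le i\le k-2$, and $\ell^{k-3}(k-1)=2^{k-3}$. Since the Kronecker correction $\delta^{k-1}_{k-1+i}$ contributes $1$ precisely when $i=0$, the column step is $2^{k-2}-1$ at letter $0$, $2^{k-2}$ at letter $1$, and strictly less than $2^{k-2}-1$ for every other letter. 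Thus the unique maximal step $2^{k-2}$ occurs exactly at the letter-$1$ columns, which by definition index $M$.

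Third, the coverage becomes explicit. The translates around $d_j\in d\Delta^{k-3}$ cover the integer window $[d_j-2^{k-3}+1,\,d_j+2^{k-3}-1]$ of width $2^{k-2}-1$. Consecutive such windows around $d_j$ and $d_{j+1}$ meet or overlap whenever the step is at most $2^{k-2}-1$, which holds at every non--letter-$1$ column. At a letter-$1$ column the step is exactly $2^{k-2}$, so the two windows miss precisely the single integer $d_j+2^{k-3}$; letting $d_j$ range over $M$ produces the gap set $M+2^{k-3}=S$. Since $\min d\Delta^{k-3}=2^{k-3}$, the coverage reaches $1$ from below. Combining with Lemma~\ref{lem14} yields the claimed partition of $\mathbb{N}\setminus S$.

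The main obstacle is the step-size accounting: one must iterate Lemma~\ref{lem9} carefully and show that $i=1$ is the \emph{unique} letter giving step $2^{k-2}$. Without this uniqueness the gap set could exceed $M+2^{k-3}$, so identifying the maximal-step columns is where the content of the lemma really lies.
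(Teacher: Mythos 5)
Your proof is correct and follows essentially the same route as the paper: iterate Lemma~\ref{lem13} to write each difference row as translates of $d\Delta^{k-3}$, observe that the shifts fill the window $\{-(2^{k-3}-1),\ldots,2^{k-3}-1\}$ around each entry, and locate the unique gaps at the letter-$1$ columns, where the step is $2^{k-2}$ and the missed integer is $d_j+2^{k-3}$, giving $S=M+2^{k-3}$. Your version is in fact more careful than the paper's: the window half-width $2^{k-3}-1$ and the explicit values of $\ell^{k-3}(i)$ you compute are the correct ones (the paper's printed range $\{-2^{k-2}+1,\ldots,2^{k-2}-1\}$ and its claim that $\ell^{k-3}(i)$ has maximal value $2^{k-3}$ appear to be slips), and your verification of the initial element $\min d\Delta^{k-3}=2^{k-3}$ and of the uniqueness of the maximal step supplies details the paper leaves implicit.
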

\begin{proof}
We only need to prove that the difference rows cover $\mathbb N\setminus S$.
By iterating Lemma~\ref{lem13} each $d\Delta^i$ is the union
of all $d\Delta^{k-3}\pm 2^{k-4}\pm \cdots\pm 2^{i}$,
with $d\Delta^{k-3}$ the final difference row of our table.
The union of these rows is equal to the union
of $d\Delta^{k-3}+n$ for all integers $n$ that can be written as
$\pm 2^{k-3}\pm \cdots\pm 2^{i}$ for some $i$
and some choice of the signs (including $n=0$). As before we have
$n\in\{-2^{k-2}+1,\ldots, 2^{k-2}-1\}$.
The maximal step in $d\Delta^{k-3}$ is $2^{k-2}$, which
occurs in the $1$-columns. All other elements are covered.
\end{proof}

\begin{theorem}\label{thm3}
The rows in the double-difference table form a partition of $\mathbb N$.
\end{theorem}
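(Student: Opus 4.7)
The plan is to reduce the theorem to proving the single identity $\Sigma=S$ between the sum row and the exceptional set $S=M+2^{k-3}$. Indeed, Lemma~\ref{lem14} shows that the difference rows $d\Delta^0,\ldots,d\Delta^{k-3}$ are pairwise disjoint, and Lemma~\ref{lem15} shows that they partition $\mathbb N\setminus S$. So once we know $\Sigma=S$ as sets, $\Sigma$ is automatically disjoint from every $d\Delta^i$, and the rows together cover all of $\mathbb N$.

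To compare $\Sigma$ with $S$ I would argue that they are two strictly increasing sequences with identical step sequences. Since $\Sigma_n=\sum_{i=0}^{k-2}\Delta^i_n=X^{k-1}_n-X^0_n$, Lemma~\ref{lem7} tells us that the step from $\Sigma_n$ to $\Sigma_{n+1}$ equals $\ell^{k-1}(\omega_n)-\ell^0(\omega_n)$. Lemma~\ref{lem12} gives precisely the same expression for the $j$-th step in $M$, and hence in $S$. A small but essential point is that the two indexings are compatible: in the decomposition $\omega=\theta^2(\omega_1)\theta^2(\omega_2)\cdots$ every block begins with $01$, so the $j$-th occurrence of $1$ in $\omega$ lies in the block $\theta^2(\omega_j)$, which means that the letter $\omega_j$ controls the $j$-th step of both $M$ and $\Sigma$.

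It then suffices to match the first entries. Lemma~\ref{lem7} gives $\Sigma_1=X^{k-1}_1-X^0_1=2^{k-1}-1$, and since the first $1$ in $\omega$ occupies column $n=2$ we have $M_1=d\Delta^{k-3}_2$. Iterating Lemma~\ref{lem9} at $i=0$ yields $\ell^h(0)=2^{h+1}$ for $h\le k-2$ and $\ell^{k-1}(0)=2^k-1$; feeding these into $X^j_2=2^j+\ell^j(0)$ and then into $M_1=\Delta^{k-2}_2-\Delta^{k-3}_2$ produces $M_1=3\cdot 2^{k-3}-1$. Hence $\Sigma_1-M_1=2^{k-3}$, and combined with the matching of step sequences this gives $\Sigma=M+2^{k-3}=S$. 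The main obstacle is to make this base case land on exactly the offset $2^{k-3}$: the two Kronecker-delta corrections in Lemma~\ref{lem9} that break the doubling of $\ell^h$ produce a $-1$ in both $\Sigma_1$ and $M_1$, and these have to conspire to leave a clean power of two. Beyond this bookkeeping, the proof is a routine comparison of two increasing sequences with identical step patterns.
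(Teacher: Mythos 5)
Your proof is correct and takes essentially the same route as the paper: both arguments reduce the theorem to the identity $\Sigma=S$ (difference rows partition $\mathbb N\setminus S$ by Lemma~\ref{lem15}), match the step sequences of $\Sigma$ and $M$ via Lemma~\ref{lem12} and the bottom row of Equation~\ref{eq5}, and then check that the initial elements agree. Your computation $M_1=3\cdot 2^{k-3}-1$, giving $\Sigma_1-M_1=2^{k-3}$, agrees with the paper's (whose displayed formula contains a typo, $2^{k+3}$ where $2^{k-3}$ is meant).
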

\begin{proof}
By Lemma~\ref{lem12} the sequences $S$ and $\Sigma$ have equal steps.
We only need to show that they have the same initial element.
The initial element of $\Sigma$ is
the sum of the initial elements of the difference sequences. This
is $\ell^{k-1}(0)-\ell^0(0)=2^{k-1}-1$.
The initial element of $M$ is the second element of $d\Delta^{k-3}$.
The first element is $2^{k-3}$ and the second element is
$2^{k-3}+\ell^{k-3}(0)-1=2^{k-3}+2^{k-2}-1$. Finally,
the initial element of $S$ is $2^{k+3}+2^{k-3}+2^{k-2}-1=2^{k-1}-1$.
Indeed, $S$ and $\Sigma$ have the same initial element.
\end{proof}

\section{Proof of Theorem~\ref{thm1}}

We show how to generate the difference table of the Tribonacci word by
a mex-rule. This rule turns out to be the same as the rule for the $P$-positions
of Splythoff Nim, which settles the proof of Theorem~\ref{thm1}.

We denote the elements of the difference sequence $\Delta^j$ by $a^j_1,a^j_2,\ldots$
and the elements of $d\Delta^j$ by $d^j_1,d^j_2,\ldots$.
As before, $X_i$ denotes the first $i$ entries of a sequence $X$.
\begin{lemma}\label{lem16}
$a^0_{i+1}=\mathrm{mex}\left(\Delta^0_i\cup\Delta^1_i\cup\cdots\cup\Delta^{k-2}_i\right)$ .
\end{lemma}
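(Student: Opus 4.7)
The plan is to leverage the partition theorem proved just above: the rows $\Delta^0, \Delta^1, \ldots, \Delta^{k-2}$ partition $\mathbb{N}$. Given this, each natural number $m$ lies in a unique row $\Delta^j$ at a unique position, and $m$ fails to appear in the union $\Delta^0_i \cup \cdots \cup \Delta^{k-2}_i$ precisely when its index in its row exceeds $i$, i.e.\ when $m \geq a^j_{i+1}$. Thus $\mathrm{mex}(\Delta^0_i \cup \cdots \cup \Delta^{k-2}_i) = \min_j a^j_{i+1}$, and the lemma reduces to showing that this minimum is attained at $j = 0$.

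The monotonicity I would establish is $a^j_n < a^{j+1}_n$ for every $n \geq 1$ and $0 \leq j \leq k-3$. Lemma~\ref{lem10} provides the inclusion $\Delta^{j+1} - 2^j \subset \Delta^j$, so the $n$ values $a^{j+1}_\ell - 2^j$ for $\ell = 1, \ldots, n$ are distinct, strictly increasing elements of $\Delta^j$, all bounded above by $a^{j+1}_n - 2^j$. Consequently the $n$-th element of $\Delta^j$ satisfies $a^j_n \leq a^{j+1}_n - 2^j < a^{j+1}_n$. Iterating in $j$ yields $a^0_{i+1} < a^j_{i+1}$ for each $j \geq 1$, so $\min_j a^j_{i+1} = a^0_{i+1}$, as required.

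There is no real obstacle here; the lemma is essentially a short consequence of the partition theorem together with the doubling structure captured by Lemma~\ref{lem10}. The only bookkeeping point is to confirm that the translate $a^{j+1}_\ell - 2^j$ is a positive natural number, which follows from the known fact that the minimal step, and hence the first entry, of $\Delta^{j+1}$ equals $2^{j+1}$, so $a^{j+1}_1 - 2^j = 2^j > 0$.
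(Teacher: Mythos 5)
Your proposal is correct and follows essentially the same route as the paper: both reduce the claim, via the partition of $\mathbb{N}$ by the rows $\Delta^j$, to the fact that $\min_j a^j_{i+1}=a^0_{i+1}$. The only difference is that the paper simply asserts that the columns of the difference table are strictly increasing, whereas you derive $a^j_n\leq a^{j+1}_n-2^j<a^{j+1}_n$ from the inclusion $\Delta^{j+1}-2^j\subset\Delta^j$ of Lemma~\ref{lem10}, which is a valid (and welcome) filling-in of that step.
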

\begin{proof}
The rows partition $\mathbb N$ and therefore
$\mathrm{mex}\left(\Delta^0_i\cup\Delta^1_i\cup\cdots\cup\Delta^{k-2}_i\right)$
occurs in one of the rows. Both rows and columns are strictly increasing.
This mex has to has to be in the first row.
\end{proof}

By the same argument we find
\begin{lemma}\label{lem17}
$d^0_{i+1}=\mathrm{mex}\left(d\Delta^0_i\cup d\Delta^1_i\cup\cdots\cup d\Delta^{k-3}_i\cup\Sigma_i\right)$.
\end{lemma}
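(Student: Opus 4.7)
The plan is to repeat the proof of Lemma~\ref{lem16} almost verbatim, with Theorem~\ref{thm3} standing in for the analogous partition result for the single-difference table. By Theorem~\ref{thm3}, the rows $d\Delta^0,\ldots,d\Delta^{k-3},\Sigma$ together partition $\mathbb N$, so the mex of the truncated union $d\Delta^0_i\cup\cdots\cup d\Delta^{k-3}_i\cup\Sigma_i$ must lie in exactly one of these rows.

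To pinpoint which row and which column, I would invoke two monotonicity statements. Along each row the entries are listed strictly increasingly, so if the mex were to sit in row $d\Delta^j$ (or in $\Sigma$) it would have to be that row's $(i+1)$-st entry; otherwise a smaller entry of the same row would already be missing from the union. Along each column I would verify the chain $d^0_l<d^1_l<\cdots<d^{k-3}_l<\sigma_l$. Granted both, the mex equals $\min\{d^0_{i+1},\ldots,d^{k-3}_{i+1},\sigma_{i+1}\}=d^0_{i+1}$, which is exactly the claim.

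The main obstacle is the column monotonicity, which Lemma~\ref{lem16} assumed silently. For the first column it is immediate from the identities $d^j_1=2^j$ and $\sigma_1=2^{k-1}-1$ already computed inside the proof of Theorem~\ref{thm3}. For subsequent columns I would combine Lemma~\ref{lem9} with Equation~(\ref{eq5}) to compare consecutive steps in the rows $d\Delta^j$: the step of $d\Delta^{j+1}$ at column $l$ dominates the corresponding step of $d\Delta^j$, so the initial strict gap $d^j_1<d^{j+1}_1$ propagates to every later column. The terminal comparison $d^{k-3}_l<\sigma_l$ comes for free, since $\sigma_l=a^0_l+\cdots+a^{k-2}_l$ dominates $a^{k-2}_l-a^{k-3}_l=d^{k-3}_l$. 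With column monotonicity in hand, the two-paragraph argument above finishes the proof.
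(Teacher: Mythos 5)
Your proposal is correct and follows essentially the same route as the paper, which proves Lemma~\ref{lem17} simply by declaring it to follow ``by the same argument'' as Lemma~\ref{lem16}: partition of $\mathbb N$ via Theorem~\ref{thm3}, plus monotonicity of rows and columns. You have in fact supplied more detail than the paper does, by explicitly verifying the column monotonicity $d^0_l<\cdots<d^{k-3}_l<\sigma_l$ (via the initial column of powers of two, the step comparison from Lemma~\ref{lem9} and Equation~(\ref{eq5}), and the bound $\sigma_l>a^{k-2}_l-a^{k-3}_l$), a point the paper leaves implicit.
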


We can now complete the final lemma to our proof of Theorem~\ref{thm1}.
\begin{lemma}\label{cor1}
The  difference table of the Tribonacci word
is identical to the rows of Table~\ref{tbl4}. The double difference table
is identical to the header and footer of that table.
\end{lemma}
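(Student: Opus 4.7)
The plan is to show, column by column, that the difference and double-difference tables of the Tribonacci word $\omega^3$ are generated by exactly the same mex-recursion as the one that produces Table~\ref{tbl4} via Equation~\eqref{mexrule}. Since that recursion together with the initial column uniquely determines the tables, this yields the identification $A\leftrightarrow\Delta^0$, $B\leftrightarrow\Delta^1$, $\Delta\leftrightarrow d\Delta^0$, $\Sigma\leftrightarrow\Sigma$, from which Theorem~\ref{thm1} follows at once by the definitions $\Delta^0_i = X^1_i-X^0_i = y_i-x_i$ and $\Delta^1_i = X^2_i-X^1_i = z_i-y_i$.

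Specializing Lemmas~\ref{lem16} and~\ref{lem17} to $k=3$ (in which case the inner union $d\Delta^0_i\cup\cdots\cup d\Delta^{k-3}_i$ in Lemma~\ref{lem17} collapses to $d\Delta^0_i$) yields
\[
a^0_{i+1} = \mathrm{mex}\bigl(\Delta^0_i\cup\Delta^1_i\bigr),\qquad d^0_{i+1} = \mathrm{mex}\bigl(d\Delta^0_i\cup\Sigma_i\bigr).
\]
By construction of the double-difference table we also have $a^1_{i+1} = a^0_{i+1}+d^0_{i+1}$, and by definition of the sum row $\sigma_{i+1} = a^0_{i+1}+a^1_{i+1}$. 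Under the identification above, these four equations are syntactically identical to those of~\eqref{mexrule}. The partition results for the difference table (Lemma~\ref{lem11} together with its coverage counterpart) and for the double-difference table (Theorem~\ref{thm3}) guarantee that each mex in fact lands in the intended row on the Tribonacci side, just as Lemma~\ref{lem2} guarantees the analogous well-posedness on the Splythoff side.

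For the base case, direct inspection of $\omega^3 = 0102010\cdots$ gives $X^0 = 1,3,5,\ldots$, $X^1 = 2,6,\ldots$, $X^2 = 4,\ldots$, so $\Delta^0_1 = 1$, $\Delta^1_1 = 2$, $d\Delta^0_1 = 1$, and $\Sigma_1 = 3$, matching the first column of Table~\ref{tbl4}. A straightforward strong induction on $i$ then closes the argument: once the first $i$ columns of the two tables agree, the two mex expressions are evaluated over identical finite subsets of $\mathbb N$ and so return equal values, and the additive identities propagate the agreement to the remaining rows of column $i+1$. The only delicate point is bookkeeping—confirming the collapse in Lemma~\ref{lem17} at $k=3$ and that each mex falls in the expected row—after which the identification, and hence Theorem~\ref{thm1}, is purely formal.
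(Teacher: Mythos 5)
Your proposal is correct and follows essentially the same route as the paper: specialize Lemmas~\ref{lem16} and~\ref{lem17} to $k=3$, combine them with the defining additive relations $a^1_{i+1}=a^0_{i+1}+d^0_{i+1}$ and $\sigma_{i+1}=a^0_{i+1}+a^1_{i+1}$, and observe that the resulting recursion coincides with the mex-rule of Equation~\eqref{mexrule}. The paper states this more tersely and leaves the induction and base case implicit (the mex-recursion is self-starting from empty sets), whereas you spell these out explicitly, which is harmless and arguably clearer.
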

\begin{proof}
If $k=3$ there are only the rows $\Delta_0, \Delta_1, d\Delta_0, \Sigma$.
The previous two lemmas state that
\begin{equation}
\begin{array}{rl}
d^0_{i+1}=&\mathrm{mex}\left(d\Delta^0_i\cup\Sigma_i\right)\\
a^0_{i+1}=&\mathrm{mex}\left(\Delta_i^0\cup\Delta_i^1\right)
\end{array}.
\end{equation}
By definition $a^1_{i+1}=a^0_{i+1}+d^0_{i+1}$ and if $s_i$  denotes the entries in $\Sigma$
then by defition $s_{i+1}=a^0_{i+1}+a^1_{i+1}$. These rules are the same as for the table
of $P$-positions in Equation~\ref{mexrule}.
 \end{proof}

\section{A $\mathrm{mex}$-rule for the Quadribonacci table}

The original goal of our paper was to find a simple impartial game that can be
coded by the Quadribonacci word.
We were unable to find such a game. The best that we can come up with
is a $\mathrm{mex}$-rule to generate the positions table of the Quadribonacci word.

\begin{lemma}\label{lem18}
The bottom row of the $k$-bonacci positions table can be written as a sum
\[
X^{k-1}=E+X^0+\cdots+X^{k-1}
\]
where $E$ denotes the enumerating sequence $1,2,3,\ldots$.
\end{lemma}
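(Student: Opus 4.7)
The displayed statement contains a typo: the right-hand side should terminate at $X^{k-2}$. Writing $x^j_i$ for the $i$-th entry of $X^j$, the task is to prove
\[ x^{k-1}_i = i + x^0_i + x^1_i + \cdots + x^{k-2}_i. \]
My plan is to count letter frequencies in the prefix $\omega_1 \cdots \omega_{x^{k-1}_i}$. Let $p_j(N)$ denote the number of $j$'s among $\omega_1,\ldots,\omega_N$. Since the rows of the positions table partition $\mathbb{N}$ we have $\sum_{j=0}^{k-1} p_j(N) = N$, so with $N = x^{k-1}_i$ (and $p_{k-1}(x^{k-1}_i) = i$ by definition) the identity reduces to showing
\[ p_j(x^{k-1}_i) = x^{k-2-j}_i \qquad (0 \leq j \leq k-2). \]

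The core ingredient will be the fixed-point equation $\omega = \theta(\omega)$, read blockwise. Since $\theta(j) = 0(j+1)$ for $j<k-1$ and $\theta(k-1) = 0$, every upstairs letter contributes exactly one $0$ at the start of its $\theta$-image, and for each $j' \geq 1$ the $j'$'s downstairs are in order-preserving bijection with the $(j'-1)$'s upstairs, each $j'$ being the second letter of some $\theta(j'-1) = 0\,j'$. Setting $N = |\theta(\omega_1 \cdots \omega_m)|$ therefore yields
\[ p_0(N) = m, \qquad p_{j'}(N) = p_{j'-1}(m) \quad (1 \leq j' \leq k-1). \]
Moreover, the same bijection gives $x^{j'}_i = |\theta(\omega_1 \cdots \omega_{x^{j'-1}_i})|$ for every $j' \geq 1$: the $i$-th $j'$ in $\omega$ is produced by the $i$-th $(j'-1)$, at the right end of its $\theta$-block.

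Combining these, I peel off one level of the substitution at each step:
\[ p_j(x^{k-1}_i) = p_{j-1}(x^{k-2}_i) = p_{j-2}(x^{k-3}_i) = \cdots = p_0(x^{k-1-j}_i) = x^{k-2-j}_i, \]
valid for $0 \leq j \leq k-2$. For $j = k-1$ the same chain terminates one step earlier at $p_1(x^1_i) = p_0(x^0_i) = i$, reconfirming the definition of $p_{k-1}(x^{k-1}_i)$. Summing over $j$ from $0$ to $k-1$ then gives the desired formula.

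The only delicate point is the bookkeeping around the exceptional image $\theta(k-1) = 0$: one must verify that both the ``one $0$ per upstairs letter'' count and the identity $x^{j'}_i = |\theta(\omega_1 \cdots \omega_{x^{j'-1}_i})|$ still hold because every $\theta$-block, including the singleton $\theta(k-1)$, begins with a $0$, and because the letters $0,1,\ldots,k-2$ preceding the iterated index $x^{j'-1}_i$ are all different from $k-1$, so that no off-by-one error is introduced in $p_{k-1}$.
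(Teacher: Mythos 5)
Your proof is correct, and you are right that the displayed sum should terminate at $X^{k-2}$; the paper's own proof confirms this, since it reduces the claim to $\ell^{k-1}(i)=1+\ell^{0}(i)+\cdots+\ell^{k-2}(i)$ (written there with another typo, $\ell^{j-2}$ for $\ell^{k-2}$). Your route is genuinely different from the paper's. The paper argues by induction over the columns of the positions table: the first column consists of the powers of $2$, for which the identity $2^{k-1}=1+2^0+\cdots+2^{k-2}$ is immediate, and by Lemma~\ref{lem7} each subsequent column is obtained by adding the vector of block lengths $\ell^j(i)=|\theta^{j+1}(i)|$, so the lemma reduces to the step identity above, which the authors deduce from the observation that these lengths double at every stage except exactly once (when the letter $k-1$ is produced), leaving the verification to the reader. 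You instead prove each column identity directly by counting letters in the prefix $\omega_1\cdots\omega_{x^{k-1}_i}$: the partition property gives $\sum_j p_j(N)=N$ with $p_{k-1}(x^{k-1}_i)=i$, and iterated desubstitution via $\omega=\theta(\omega)$ gives $p_j(x^{k-1}_i)=x^{k-2-j}_i$ for $0\le j\le k-2$. Your argument is self-contained (no appeal to the ``doubling except once'' fact and no induction on columns), and it isolates a stronger, independently interesting identity --- the $i$-th entry of row $X^{k-2-j}$ counts the occurrences of the letter $j$ up to the $i$-th occurrence of $k-1$ --- of which the lemma is simply the sum over $j$. The paper's proof is shorter given the table-generation machinery it already has in place; yours gives a cleaner structural explanation of where the formula comes from. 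The two ingredients you use, namely that every $\theta$-block contributes exactly one $0$ and that the $i$-th $j'$ is the last letter of the image of the $i$-th $(j'-1)$, are both sound, including for the exceptional block $\theta(k-1)=0$, so no gap remains.
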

\begin{proof}
This is true for the initial column. Each next column is an increment by the
vector in Equation~\ref{eq3}. We need to show that
\[
\ell^{k-1}(i)=1+\ell^{0}(i)+\cdots+\ell^{j-2}(i).
\]
This is a consequence of the following observation. Suppose you
start from $1$
and double each time, except once, when you double and subtract one.
Then the final number is the sum of the other numbers.
We leave the verification to the reader. The equality above
follows from the observation for $i<k-1$. In that case,
we have $\ell^0(i)=2$ and each next $\ell^{j}(i)$ is doubled,
unless $k-1$ appears in $\theta^j(i)$, which happens once.
If $i=k-1$ then $\ell^{j}(k-1)=2^{j}$ and again the equation holds.
\end{proof}

According to this lemma it suffices to generate the first
three rows of the Quadribonacci table, and compute the fourth
row as a sum. These three rows can be derived from
the first two rows of the difference table, which can be derived
from the first row of the double-difference table, which follows
from the $\mathrm{mex}$ rule. That is the idea behind
Theorem~\ref{thm4} below, in which we generate all three tables of the
Quadribonacci word simultaneously.

We write $x^j_i$ for the elements of the positions table.
\begin{theorem}\label{thm4}
The following rules generate the three tables for the Quadribonacci word:
\begin{eqnarray*}
a^0_{i+1}&=&\mathrm{mex}\left(\Delta^0_i\cup\Delta^1_i\cup\Delta^{2}_i\right)\\
b^0_{i+1}&=&\mathrm{mex}\left(d\Delta^0_i\cup d\Delta^1_i\cup \Sigma_i\right)\\
x_{i+1}^0&=&\mathrm{mex}\left(X^0_i\cup X^1_i\cup X^{2}_i\cup X^3_i\right)\\
a^1_{i+1}&=&a^0_{i+1}+b^0_{i+1}\\ x^1_{i+1}&=&x^0_{i+1}+a^0_{i+1}\\ x^2_{i+1}&=&x^1_{i+1}+a^1_{i+1}\\
x^3_{i+1}&=&x^0_{i+1}+x^1_{i+1}+x^2_{i+1}+i+1\\
a^2_{i+1}&=&x^3_{i+1}-x^2_{i+1}\\
b^2_{i+1}&=&a^1_{i+1}+a^2_{i+1}+a^3_{i+1}.
\end{eqnarray*}
\end{theorem}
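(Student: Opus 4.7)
The plan is a straightforward induction on the column index $i$, with the hypothesis that columns $1$ through $i$ of the positions table (rows $X^0,\ldots,X^3$), the difference table (rows $\Delta^0,\Delta^1,\Delta^2$), and the double-difference table (rows $d\Delta^0,d\Delta^1,\Sigma$) have been correctly produced. I will show that the nine displayed equations, evaluated in the order listed, deliver column $i+1$ of all three tables. Three of the rules are $\mathrm{mex}$-rules and rest on the partition results already established; the remaining six are arithmetic identities that unpack the definitions $\Delta^j=X^{j+1}-X^j$, $d\Delta^j=\Delta^{j+1}-\Delta^j$, and $\Sigma=\Delta^0+\Delta^1+\Delta^2$, together with Lemma~\ref{lem18}.

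For the three $\mathrm{mex}$-rules: the equation for $a^0_{i+1}$ is Lemma~\ref{lem16} specialized to $k=4$; both rows and columns of the difference table are strictly increasing, and $\Delta^0\cup\Delta^1\cup\Delta^2=\mathbb{N}$ by the preceding partition theorem, so the mex of the first $i$ entries of all three rows is forced to equal $\min_j \Delta^j_{i+1}=\Delta^0_{i+1}$. The equation for $b^0_{i+1}$ is Lemma~\ref{lem17} specialized to $k=4$, relying on Theorem~\ref{thm3} (the rows of the double-difference table partition $\mathbb{N}$) in place of the difference-row partition. The equation for $x^0_{i+1}$ is parallel: the rows $X^0,\ldots,X^3$ partition $\mathbb{N}$ by construction, and since every column of the positions table is strictly increasing ($x^0_j<x^1_j<x^2_j<x^3_j$), the minimal value of $\mathbb{N}$ missing from $\bigcup_j X^j_i$ is forced to be $x^0_{i+1}$.

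The remaining identities are single-line definition-chasing. From $\Delta^j=X^{j+1}-X^j$ we read off $x^1_{i+1}=x^0_{i+1}+a^0_{i+1}$, $x^2_{i+1}=x^1_{i+1}+a^1_{i+1}$, and $a^2_{i+1}=x^3_{i+1}-x^2_{i+1}$. From $d\Delta^0=\Delta^1-\Delta^0$ we read off $a^1_{i+1}=a^0_{i+1}+b^0_{i+1}$. Lemma~\ref{lem18} specialized to $k=4$ yields $x^3_{i+1}=x^0_{i+1}+x^1_{i+1}+x^2_{i+1}+(i+1)$. The final rule expresses the next entry of the sum row as the appropriate combination of the already-computed differences in column $i+1$. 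The base case $i=0$ is verified directly against the initial column $(1,2,4,8)$ of the positions table, reading $\mathrm{mex}(\emptyset)=1$ in the convention $\mathbb{N}=\{1,2,3,\ldots\}$.

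The only genuine concern is bookkeeping: confirming that the order in which the nine equations are listed is a valid evaluation order. Each right-hand side contains only quantities known either from the inductive hypothesis (entries of columns $1,\ldots,i$ in any of the three tables) or from an earlier equation in the same column — for instance, $a^1_{i+1}$ must be computed (equation four) before it is used in $x^2_{i+1}$ (equation six), and $x^3_{i+1}$ before $a^2_{i+1}$, which is exactly the listed order. No circularity arises, and the induction closes.
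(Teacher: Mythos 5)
Your proposal is correct and follows essentially the same route as the paper's (much terser) proof: the first two $\mathrm{mex}$-rules are Lemmas~\ref{lem16} and~\ref{lem17} specialized to $k=4$, the third is the parallel argument for the positions table, and the remaining identities unwind the definitions of the difference tables together with Lemma~\ref{lem18} for the $x^3$-rule. Your added care about the base column $(1,2,4,8)$ and the evaluation order of the nine equations only makes explicit what the paper leaves implicit.
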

\begin{proof}
The first and second equation follow from Lemma~\ref{lem16} and \ref{lem17}.
The third equation can be derived in an equivalent manner. The other equations
follow from Lemma~\ref{lem17} and the definition of the difference tables.
\end{proof}

This method to generate the Quadribonacci table is comparable to Duch\^{e}ne and Rigo's
method to generate the positions table of cubic substitutions~\cite[Prop. 4]{DR3}.
Duch\^{e}ne and Rigo were able to describe a set of forbidden (malicious) moves
from their $\mathrm{mex}$-rule, which involves the difference table.
We were unable to find forbidden moves for the Quadribonacci word.
Our rule involves the double difference table and therefore one has to dig deeper.

\section{Some further remarks on Splythoff Nim}

We did not consider the Sprague-Grundy values of Splythoff Nim.
For Wythoff Nim, these values remain an object of study. It is
known that every row, column and diagonal of $\mathbb N\times \mathbb N$
contains each Sprague-Grundy value once and no more. More specifically,
if we fix one pile and we let the other pile increase from zero
to infinity, then the Sprague-Grundy values are a permutation of $\mathbb N$
(if we include zero as a natural number). The same is true if we fix the
difference between the piles and let the smallest pile increase from
zero to infinity~\cite{BF}.

For Splythoff Nim, it appears that every row or column contains each Sprague-Grundy
value once and no more. We do not have a proof for that. Diagonals do not contain
each value. For instance, the diagonal $(n,n+3)$ does not contain Sprague-Grundy
value zero. Similarly, the diagonal $(n,n+4)$ does not contain value one. Wythoff
Nim can be played with a queen on a chessboard. Splythoff Nim is played with a queen
that can reflect against the boundary. To get all Sprague-Grundy values, one has
to include the reflection of the diagonal. Again, we have no proof for that. 

\begin{table}[ht]
{\small\begin{tabular}{|ccc|ccc|ccc|ccc|ccc|ccc|ccc|}
\hline
 17&\it 13&\it 18&\it 20&\it 12&  11&\it 15&\it 22& \it  14&\it  4&\it  1&\it   19&\it   16&\it   10&\it  7&  24&  25& 	21\\
16&17&15&\it 19&\it 20&\it 14&\it 21&\it 12&\it 22&\it 0&\it 5&\it 8& 6&\it 24&\it 10& 9&13&25\\
15&16&\it 14&18&\it 19&\it 17&\it 20&\it 21&\it 12&\it 2&\it 4&\it 22&\it 23&\it  7&\it  8&\it 11& 9&24\\
\hline
14&\it 15&\it 16&\it 17&\it 18&\it 13&\it 12&\it 19&\it 1&\it 3&\it 20&\it 21&\it 22&\it 23&\it 9&\it 8&\it 10&\it 7\\
13&14&12&11&\it 8&\it16&17&\it 0&\it 9& 5& 6&\it 18&\it 21&\it 19&\it 23&\it 7&\it 24&\it 10\\
12&\it 9&\it 13&\it 7&11&\it 15&\it 14&\it 2&18&\it 8&\it 19&\it 20&10&\it 21&\it 22&\it 23& 6&\it 16\\
\hline
11&\it 12&10&\it 13&\it 14&\it 9&\it 0&\it 16&17& 6&\it 7& 15&\it 20&\it 18&\it 21&\it 22&\it 8&\it 19\\
10&11& 9& 8&13&12&\it 2&15&16&17&14&\it 7&\it 19& 6&\it 20&\it 4&\it 5&\it 1\\
 9&10&11&12&\it 1& 7&13&14&15&16&17& 6&\it 8& 5&\it 3&\it 2&\it 0&\it 4\\
\hline
 8& 6& 7&10&\it 0& 2& 5& 3& 4&15&16&17&18&\it 9&\it 1&\it 12&\it 22&\it 14\\
 7& 8& 6& 9&\it 10& 1& 4& 5& 3&14&15&\it 16&\it 2&\it 0&\it 19&\it 21&\it 12&\it 22\\
 6& 7& 8& 1& 9&10& 3& 4& 5&13&\it 2&\it 0&\it 14&17&\it 12&\it 20&\it 21&\it 15\\
\hline
 5& 3& 4& 0& 6& 8&10& 1& 2& 7&\it 12&\it 9&\it 15&\it 16&\it 13&\it 17&14&11\\
 4& 5& 3& 2& 7& 6& 9&\it 10&\it 0&\it 1&13&\it 14&11&\it 8&\it 18&\it 19&\it 20&\it 12\\
 3& 4& 5& 6& 2& 0& 1& 9&10&12& 8&\it 13&\it 7&11&\it 17&18&\it 19&\it 20\\
\hline
 2& 0& 1& 5& 3& 4& 8& 6& 7&11& 9&10&\it 13&12&\it 16&\it 14&15&\it 18\\  
 1& 2& 0& 4& 5& 3& 7& 8& 6&10&11&\it 12&\it 9&14&\it 15& 16& 17 &\it 13\\
 0& 1& 2& 3& 4& 5& 6& 7& 8& 9&10&11&12&13&14&15&16&17\\
\hline
\end{tabular}}
\\[9pt]
	\caption{\small The Sprague-Grundy values of Splythoff Nim 
for piles $(m,n)$ containing up to sixteen counters. It should be
compared to the table of Wythoff Nim~\cite[p. 74]{BCG}, 
described as `chaotic'. A Sprague-Grundy value is printed in italics 
if it is different for Wythoff Nim. }\label{tblSG}
\end{table}

The $k$-bonacci substitution is connected to numeration systems.
Indeed, Sirvent introduced the substitution in~\cite{S}
to study $k$-bonacci numberation systems.
The relation between Wythoff's Nim and Fibonacci numeration
is well known and has been extended to other Wythoff-like games
by~Fraenkel~\cite{F, FY, GF}.
We did not consider the relation between the $P$-positions of Splythoff Nim
in Table~\ref{tbl4}
and the Tribonacci numeration system, but it is the central idea in the analysis
of Dekking, Shallit, and Sloane of the Greedy Queens on a Spiral.
They build on numeration results of Carlitz, et al.~\cite{CSH} to
describe the coordinates of the $P$-positions in terms of Tribonacci numeration.

Let $t^j_n$ be the $n$-th entry in the $j$-th row of the positions table
of the $k$-bonacci word.
It appears that $t^j_n$ is well approximated by $n\lambda_k^j$.
For the Fibonacci word the
approximation is as close as possible: $-1<t^j_n-n\lambda^j_2<0$.
For the other $k$-bonacci words the approximation is less close.
Dekking, Shallit and Sloane~\cite{DSS} give sharp
bounds on $t_3^j-n\lambda^j_3$.
The difference $t_n^j-n\lambda_k^j$ is known as the {\em symbolic discrepancy}.
Adamczewski~\cite{A} proved that all Pisot substitutions have
bounded discrepancy.   
The $k$-bonacci substitutions are Pisot and therefore
the difference $t_j^n-n\lambda_k^j$ in our table is bounded.

\section{$a$-Splythoff Nim}

Recall that $a$-Wythoff is a modification of Wythoff Nim in which a player may remove
$x$ coins from one pile and $y$ coins from the other, if $|x-y|<a$.
We can also modify $a$-Wythoff by allowing a split.
We say that a move is a \emph{double} if it removes coins from both piles.
If one of the piles is cleared by
a double, then in $a$-Splythoff the player may split the remaining pile.
It appears that $2$-Splythoff can also be coded by the Tribonacci word, as illustrated by
Table~\ref{tbl8}. The $B$-row appears to be equal to the difference between the
first row and the third row in the positions table of $\omega^3$.

\begin{table}[ht]
{\small\begin{tabular}{c|ccccccccccccccccc}
\rowcolor[gray]{0.8}$\omega^3$&0&1&0&2&0&1&0&0&1&0&2&0&1&0&1&0&2\\
\hline
$A$&1 &2 &4 &5 &6 &7 &9 &10 &11 &13 &14 &15 &16 &18 &19 &21&22\\
$B$&3 &8 &12 &17 &20 &25 &29 &34 &39 &43 &48 &51 &56 &60 &65 &69&74
\end{tabular}}
\\[9pt]
	\caption{\small The $P$-positions for $2$-Splythoff Nim. The steps appear
to be coded by the Tribonacci word: $0$ corresponds to a step of $(1,5)$,
$1$ corresponds to $(2,4)$, and $2$ corresponds to $(1,3)$.}\label{tbl8}
\end{table}

The next table contains the first $P$-positions of $3$-Splythoff Nim.
Again, there appear to be only three steps in the table, which we code by $0, 1, 2$.
The code word appears to be fixed under the substitution $0\mapsto 01$,
$1\mapsto 2$, $2\mapsto 01$.

\begin{table}[ht]
{\small\begin{tabular}{c|ccccccccccccccccc}
\rowcolor[gray]{0.8}
&  0&  1& 2& 0& 1&  0&  1& 2&  0&  1&2&  0&  1&  0&  1&2\\
\hline
$A$
&  1&  2&  3&  5&  6&  7&  8&  9& 11& 12& 13& 15& 16& 17& 18& 19\\
$B$ &  4& 10& 14& 20& 26& 30& 36& 40& 46& 52& 56& 62& 68& 72& 78& 82
\end{tabular}}
\\[9pt]
	\caption{\small The $P$-positions for $3$-Splythoff Nim. The header codes
the steps in the table.}
\end{table}

Finally, we look at $4$-Splythoff. Now the table appears to have four steps,
but the code of the steps is
$
012302010420121013002312011132\cdots
$
which does not seem to be fixed by a substitution on four letters.

\begin{table}[ht]
{\small\begin{tabular}{c|ccccccccccccccccc}
\rowcolor[gray]{0.8}
&  0&  1&  2&  3&  0&  2&  0&  1&  0&  4&  2&  0&  1&  2&  1&  0&  1\\
\hline
$A$&  1&  2&  3&  4&  6&  7&  8&  9& 10& 11& 13& 14& 15& 16& 17& 18& 19\\
$B$&  5& 12& 21& 26& 34& 41& 46& 53& 62& 69& 79& 84& 91&100&105&114&121
\end{tabular}}
\\[9pt]
	\caption{\small The $P$-positions for $4$-Splythoff Nim. The header codes
the steps in the table.}
\end{table}

We leave these observations as an open question:
\medbreak
\fbox{\begin{minipage}{30em}
\textbf{Question:} Is it true that the table of $P$-positions for
$a$-Splythoff Nim has a finite number of steps
for each $a$? Is the code word of the steps fixed under a substitution
on three letters if $a=2$ or $a=3$?\end{minipage}}

\section{Acknowledgement}
We thank Michel Dekking for bringing the Greedy Queens
in a spiral to our attention, and we thank Dirk Frettl\"oh and 
Jamie Walton for useful conversations.
Dan Rust would like to acknowledge
support of the Dutch Science Federation (NWO) through visitor
grant 040.11.700 and the German Research Foundation (DFG) via the
Collaborative Research Centre (CRC 1283).

\bigskip
\noindent
Institute of Applied Mathematics\\
Delft University of Technology\\
Mourikbroekmanweg 6 \\
2628 XE Delft, The Netherlands\\
\texttt{r.j.fokkink@tudelft.nl}
\\[4mm]
Fakult\"{a}t f\"{u}r
Mathematik\\
 Universit\"{a}t
Bielefeld\\
Universit\"{a}tsstrasse 25\\
D-33615 Bielefeld, Germany \\
\texttt{drust@math.uni-bielefeld.de}
\end{document}